\newtheorem{theorem}{Theorem}
\newtheorem{prop}{Proposition}
\newtheorem{lemma}{Lemma}
\newtheorem{false statement}{False statement}
\theoremstyle{definition}
\newtheorem{case}{Case}
\newcounter{mathitem}
\begin{document}

\title{\bf\Large Long paths and cycles passing through specified vertices {under} the average degree condition}

%\date{}

\author{Binlong Li, Bo Ning and Shenggui Zhang\thanks{Corresponding author.
E-mail address: sgzhang@nwpu.edu.cn (S. Zhang).}\\[2mm]
\small Department of Applied Mathematics,
\small Northwestern Polytechnical University,\\
\small Xi'an, Shaanxi 710072, P.R.~China} \maketitle

\begin{abstract}
{Let $G$ be a $k$-connected graph with $k\geq 2$. In this paper we first prove that: For two distinct vertices
$x$ and $z$  in $G$, it contains a path
passing through its any $k-2$ {specified} vertices with length at least the
average degree of the vertices other than $x$ and $z$. Further, with this result, we prove that: If $G$ has $n$ vertices and $m$ edges, then it contains a cycle of length at least $2m/(n-1)$ passing through its any $k-1$ specified vertices. Our results generalize a theorem of Fan on the existence of long paths  and a classical theorem of Erd\"os and Gallai on the existence of long cycles under the average degree condition.}

\medskip
\noindent {\bf Keywords:} Long paths, Long cycles, Average degree
\smallskip
\end{abstract}

\section{Introduction}

We use Bondy and Murty \cite{Bondy_Murty} for terminology and
{notations} not defined here and consider finite simple graphs only.

Let $G$ be a graph {and $H$ a subgraph of $G$. We} use $V(H)$ and $E(H)$ to
denote the {set} of vertices and edges of $H$, respectively, and use
$e(H)$  { for the number of the edges of $H$}. For a vertex $v\in V(G)$,
$N_H(v)$ {denotes} the set, and $d_H(v)$ the number, of neighbors of $v$ in
$H$. We call $d_H(v)$ the \emph{degree} of $v$ in $H$. Let $x$ and $z$ be two distinct vertices of $G$. A path connecting
{$x$ and $z$} {is} called an $(x,z)$-\emph{path}. For a
subset $Y$ of $V(G)$, an $(x,z)$-path passing through all the
vertices in $Y$ is {called} an $(x,Y,z)$-\emph{path}, and a cycle passing
through all the vertices in $Y$ is {called} a $Y$-\emph{cycle}. If $Y$
contains only one vertex $y$, an $(x,\{y\},z)$-path and a
$\{y\}$-cycle are simply denoted by an $(x,y,z)$-path and a
$y$-cycle, respectively. The \emph{distance} between $x$ and $z$ in $H$, denoted by
$d_H(x,z)$, is {the length of a shortest} $(x,z)$-path with all its
internal vertices in $H$. If no such a path exists, we define
$d_H(x,z)=\infty$. The {\emph{codistance}} between $x$ and $z$ in $H$,
denoted by $d^*_H(x,z)$, is {the length of a longest} $(x,z)$-path
with all its internal vertices in $H$. If no such a path exists, we
define $d^*_H(x,z)=0$. When no confusion occurs, we use $N(v)$, $d(v)$, $d(x,z)$ and
$d^*(x,z)$ instead of $N_G(v)$, $d_G(v)$, $d_G(x,z)$ and $d^*_G(x,z)$, respectively.

{Long path and cycle problems are interesting and important in graph theory
and have been deeply studied}, see {\cite{Bondy,Gould}.} The following Theorem by Erd\"os and Gallai opened the
study on long paths with specified end vertices.

\begin{theorem}[Erd\"os and Gallai \cite{Erdos_Gallai}]
Let $G$ be a 2-connected graph and $x$ and $z$ be two distinct
vertices of $G$. If $d(v)\geq d$ for every {vertex} $v\in
V(G)\backslash\{x,z\}$, then $G$ contains an $(x,z)$-path of
length at least $d$.
\end{theorem}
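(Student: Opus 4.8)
The plan is to argue by contradiction from a longest $(x,z)$-path. Let $P=u_0u_1\cdots u_p$ with $u_0=x$ and $u_p=z$ be an $(x,z)$-path of maximum length, and suppose for contradiction that $p\le d-1$. First I would dispose of the case $V(P)=V(G)$: then $P$ is a Hamilton path, so $p=|V(G)|-1$, while any internal vertex $v$ satisfies $d\le d(v)\le |V(G)|-1=p$, contradicting $p\le d-1$. Hence $W:=V(G)\setminus V(P)$ is nonempty. Moreover, since each internal $u_i$ has at most $p$ neighbours on $P$ but $d(u_i)\ge d\ge p+1$, every internal vertex has at least one neighbour in $W$; this is what forces $W$ to interact nontrivially with $P$.

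The engine of the proof is an insertion (rotation) lemma: no vertex $w\in W$ can be adjacent to two consecutive vertices $u_i,u_{i+1}$ of $P$, for otherwise $u_0\cdots u_i\,w\,u_{i+1}\cdots u_p$ is a strictly longer $(x,z)$-path (note this holds even when $\{u_i,u_{i+1}\}$ meets $\{x,z\}$, since $x$ and $z$ remain the endpoints). Consequently $N(w)\cap V(P)$ contains no two consecutive vertices of $P$, so $|N(w)\cap V(P)|\le\lceil(p+1)/2\rceil$. The next step is to bring in $2$-connectivity: for a chosen $w\in W$, Menger's theorem yields a fan of two internally disjoint paths from $w$ to $P$ ending at distinct $u_a,u_b$ with $a<b$, and rerouting $P$ as $u_0\cdots u_a$, then through $w$, then $u_b\cdots u_p$ produces an $(x,z)$-path longer than $P$ unless the detour is no longer than the skipped segment $u_a\cdots u_b$. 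I would combine this crossing constraint with the insertion lemma to limit severely how the neighbourhoods of the internal vertices and of the vertices of $W$ can sit on $P$.

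The main obstacle is converting these local non-extendability constraints into a global contradiction. The degree hypothesis gives $\sum_{v\ne x,z} d(v)\ge d\,(|V(G)|-2)$, and the goal is to show that the positional restrictions above (each $W$-vertex occupies a sparse, non-consecutive set of slots on a path with only $p+1\le d$ vertices, and every internal vertex must spend degree reaching into $W$) cannot accommodate this much degree unless $p\ge d$. Carrying out this count is the delicate part: one must choose the fan and the reference vertex $w$ carefully so that the ``skipped segment cannot be shortcut'' inequalities accumulate, rather than cancel, across the path. I expect this bookkeeping, together with the careful use of $2$-connectivity to protect the unconstrained endpoints $x$ and $z$ (which is exactly why the hypothesis only bounds the degrees of the other vertices), to be where essentially all the work lies; the same mechanism, relativised to the codistance $d^*$, is presumably what the paper's more general $(x,Y,z)$-path statement will require.
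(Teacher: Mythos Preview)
The paper does not prove this theorem: it is quoted as a classical result of Erd\H{o}s and Gallai and used only as background (indeed the paper's own arguments rely on Fan's stronger Theorem~3, also taken as given). So there is no ``paper's own proof'' to compare your proposal against.

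On its own merits, your proposal has a genuine gap. The setup (a longest $(x,z)$-path $P$, the insertion lemma forbidding a $W$-vertex adjacent to two consecutive $u_i$, and a Menger fan from $w$ to $P$) is standard and correct, but you stop precisely at the point where the theorem begins. You write that the ``delicate bookkeeping'' of turning these local constraints into a contradiction with $p\le d-1$ is ``where essentially all the work lies'' --- and that work is never done. Two specific problems: first, the global sum $\sum_{v\ne x,z} d(v)\ge d(|V(G)|-2)$ is strictly weaker than the per-vertex hypothesis $d(v)\ge d$ and discards exactly the information one needs; second, and more seriously, the insertion lemma only bounds $|N(w)\cap V(P)|$, not $d(w)$. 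A vertex $w\in W$ may have almost all of its $\ge d$ neighbours inside $W$, so neither the insertion bound $|N(w)\cap V(P)|\le\lceil(p+1)/2\rceil$ nor the fan-rerouting inequality contradicts anything you have written down. Something further is required to control how a component $H$ of $G-P$ attaches to $P$ and to exploit the degrees of \emph{its} vertices --- this is exactly the role of the ``strong attachment''/codistance machinery that the paper borrows from Fan to prove the more general Theorem~4, and it is not mere bookkeeping.
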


In fact, Theorem 1 has a stronger {extension due to} Enotomo.

\begin{theorem}[Enotomo \cite{Enomoto}]
Let $G$ be a 2-connected graph and $x$ and $z$ be two distinct
vertices of $G$. If $d(v)\geq d$ for every {vertex} in
$V(G)\backslash\{x,z\}$, then for every given vertex $y\in
V(G)\backslash\{x,z\}$, $G$ contains an $(x,y,z)$-path of length at
least $d$.
\end{theorem}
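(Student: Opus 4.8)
The plan is to argue by contradiction from a longest $(x,y,z)$-path, using $2$-connectivity to reroute it into a strictly longer one. First I would note that an $(x,y,z)$-path exists at all: since $G$ is $2$-connected and $y\notin\{x,z\}$, the fan version of Menger's theorem supplies two internally disjoint paths from $y$ to $\{x,z\}$, one ending at $x$ and one at $z$, whose union is an $(x,y,z)$-path. So I may choose a \emph{longest} $(x,y,z)$-path $P: x=v_0,v_1,\dots,v_\ell=z$ with $y=v_j$ for some $0<j<\ell$, and assume for contradiction that $\ell<d$, i.e. $\ell+1\le d$. Equivalently, I am trying to show $d^*(x,z)\ge d$ even under the extra constraint that the realizing path runs through $y$.

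The engine is a counting observation. Since $P$ has exactly $\ell+1$ vertices, every internal vertex $v$ (in particular $y$, and each $v_i$ with $0<i<\ell$) has at most $\ell$ neighbours on $P$; as $d(v)\ge d\ge\ell+1$, each such $v$ has at least $d-\ell\ge 1$ neighbours outside $V(P)$. Hence $P$ is not spanning, and by $2$-connectivity every component $F$ of $G-V(P)$ has at least two neighbours on $P$. The first concrete reduction comes from a local insertion move: if some component $F$ has two neighbours $v_i,v_{i+1}$ that are \emph{consecutive} on $P$, then replacing the edge $v_iv_{i+1}$ by a $v_i$--$v_{i+1}$ path whose interior lies in $F$ (length $\ge 2$) yields an $(x,y,z)$-path of length $\ge\ell+1$ --- the vertex $y$ is untouched because it cannot lie strictly between $v_i$ and $v_{i+1}$ --- contradicting maximality. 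So the attachment vertices of every off-path component must be pairwise non-adjacent along $P$, which already forces the attachments to be spread out while the counting step forces them to be numerous.

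The main obstacle is to promote this local rigidity into a genuine contradiction. For a component attaching at $v_i$ and $v_j$ with $j>i+1$, rerouting through $F$ gains the interior $F$-vertices but \emph{loses} $v_{i+1},\dots,v_{j-1}$, so a net increase in length is not automatic; worse, I must guarantee that $y$ is not among the discarded vertices and that the endpoints stay fixed at $x$ and $z$ (note that the degrees of $x$ and $z$ are uncontrolled, so I cannot simply extend past the ends). I expect the heart of the proof to be an insertion/exchange analysis that weighs these gains against losses --- using that the many off-path neighbours forced above cannot all be wasted, and reinserting skipped vertices elsewhere via their own consecutive-neighbour pairs --- organised by a case split on the position of $y$ relative to the rerouted segment, and possibly by choosing $P$ to be extremal in a secondary sense. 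I would resist the temptation to reduce directly to Theorem~1 by a vertex-splitting gadget at $y$: Theorem~1 only produces \emph{some} long $(x,z)$-path, and forcing it through $y$ is precisely the additional content of the statement, so it does not follow formally and a direct longest-path argument seems unavoidable.
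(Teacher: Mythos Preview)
The paper does not prove this theorem at all; Theorem~2 is quoted from Enomoto~\cite{Enomoto} as background and is never used in the proofs of the paper's own results (Theorems~4 and~8 rely on Theorem~3, Fan's average-degree version, not on Enomoto's theorem). So there is no ``paper's own proof'' to compare your attempt against.

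On the merits of your sketch: the setup is sound --- existence of an $(x,y,z)$-path via Menger/fan, choosing a longest one, the counting that forces off-path neighbours, and the consecutive-insertion move are all correct. But you have not actually proposed a proof; you have correctly located the obstacle and then written ``I expect the heart of the proof to be an insertion/exchange analysis \ldots'' without supplying one. That missing step is the entire content of the theorem. The difficulty you flag is real: a detour through an off-path component $F$ attaching at $v_i,v_j$ with $j>i+1$ may discard $y$ and need not increase length, and naively reinserting the skipped vertices via ``their own consecutive-neighbour pairs'' runs into the same problem recursively (their off-path neighbours may lie in $F$, which is now on the new path; or the relevant pairs may straddle $y$; or the reinsertion segments may collide). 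Enomoto's actual argument does not proceed by this kind of ad hoc local repair; it uses a carefully chosen extremal path together with a structural analysis of how components attach, and the through-$y$ constraint is handled by the choice of extremal object rather than by a case split after the fact. As written, your plan names the right ingredients but contains no mechanism that closes the gap.
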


Another direction of extending Theorem 1 is to weaken the minimum
degree condition to an average degree condition. Fan finished this
work as follows.

\begin{theorem}[Fan \cite{Fan}]
Let $G$ be a 2-connected graph and $x$ and $z$ be two distinct
vertices of $G$. If the average degree of the vertices other than
$x$ and $z$ is at least $r$, then $G$ contains an $(x,z)$-path {of}
length at least $r$.
\end{theorem}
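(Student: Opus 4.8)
The plan is to prove the following self-strengthening reformulation, which recovers the theorem upon taking $r$ to be the average degree itself: writing $S=V(G)\setminus\{x,z\}$, $n=|V(G)|$ and $\bar d=\frac{1}{|S|}\sum_{v\in S}d(v)$, I would show that the codistance satisfies
\[
 d^*(x,z)\;\ge\;\bar d,\qquad\text{equivalently}\qquad d^*(x,z)\,(n-2)\;\ge\;\sum_{v\in S}d(v).
\]
Since the hypothesis gives $\bar d\ge r$, a longest $(x,z)$-path then has length at least $r$. I would argue by induction on $n$, the base cases being immediate (for the triangle the single vertex of $S$ has degree $2=d^*(x,z)$).

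For the inductive step I would first dispose of two favourable regimes. If every vertex of $S$ satisfies $d(v)\ge\bar d$, then, degrees being integers, the minimum-degree hypothesis of Theorem~1 holds with $d=\lceil\bar d\rceil$, and Erd\H{o}s--Gallai produces an $(x,z)$-path of length at least $\bar d\ge r$. Otherwise fix a longest $(x,z)$-path $P=v_0v_1\cdots v_\ell$ with $v_0=x$, $v_\ell=z$, so that $\ell=d^*(x,z)$. The clean case is when $P$ is spanning: then $n=\ell+1$, every vertex has at most $\ell$ neighbours, hence $\sum_{v\in S}d(v)\le\ell(n-2)$ and we are done.

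The substance lies in the case $R:=V(G)\setminus V(P)\ne\emptyset$. Here maximality of $P$ gives the standard insertion obstruction: no vertex $u\in R$ is adjacent to two consecutive vertices $v_i,v_{i+1}$ of $P$ (else $v_i\,u\,v_{i+1}$ lengthens $P$), so the neighbours of each off-path vertex are spread out along $P$. Moreover $2$-connectivity forces every component $B$ of $G-V(P)$ to send edges to at least two distinct path vertices $v_i,v_j$, while maximality bounds any detour through $B$ against the length $j-i$ of the bypassed segment. My plan is to combine these facts to charge the degrees of the off-path vertices, together with the edges they span inside $S$, against the internal segments of $P$, so as to recover $\sum_{v\in S}d(v)\le\ell(n-2)$; alternatively, to delete from a suitable off-path component a vertex $w$ whose removal preserves $2$-connectivity and does not lower the average, and then invoke the inductive hypothesis on $G-w$.

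The main obstacle is exactly this last control. Deleting a vertex $w\in S$ keeps the average at least $\bar d$ only when $d(w)+|N(w)\cap S|\le\bar d$; but $\sum_{w\in S}\bigl(d(w)+|N(w)\cap S|\bigr)=\sum_{w\in S}d(w)+2e(G[S])$, so the mean of this quantity exceeds $\bar d$ by $2e(G[S])/|S|$, and averaging no longer guarantees a deletable vertex. Thus the edges lying inside $S$ obstruct a naive induction, and they must be absorbed by the segment-charging argument above. Making the discharging between off-path components and path segments tight enough to beat these internal edges, while respecting the attachment constraints imposed by $2$-connectivity, is the crux of the proof; everything else is bookkeeping.
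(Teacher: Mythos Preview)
The paper does not give its own proof of Theorem~3: it is quoted from Fan~\cite{Fan} and then used as a black box inside the proof of Proposition~4 (Theorem~4). What the paper \emph{does} import from~\cite{Fan} is the machinery in Section~2 (Propositions~1--3, Lemmas~1--2), and that machinery is exactly what your sketch is missing.

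Your plan is honest about where it stops, but the stopping point is the whole theorem, not bookkeeping. You propose induction on $n$ via single-vertex deletion, observe correctly that deleting $w\in S$ preserves the average only when $d(w)+|N(w)\cap S|\le\bar d$, note that edges inside $S$ obstruct this, and then hand the problem to an unspecified ``discharging between off-path components and path segments''. Fan's resolution is structurally different from anything in your outline. The induction is on $|V(G-P)|$, not on $n$, and the reduction is never a single vertex: one either deletes an entire off-path component $H$ (legitimate precisely when $r'h+e(P-\{x,z\},H)\le rh$, so the average over $S$ survives), or, when $H$ is separable, contracts an endblock of $H$ (Proposition~3). When neither reduction applies, the resulting inequalities force $H$ to have a \emph{maximum strong attachment} $T=\{u_1,\dots,u_t\}$ on $P$ with $t$ large (Lemma~1(3)); one then selects a strong attached pair $\{u_p,u_{p+1}\}$, forms the $2$-connected auxiliary graph on $V(H)\cup\{u_p,u_{p+1}\}$, applies the theorem there to obtain $d_H^*(u_p,u_{p+1})\ge r-2s-2t+\theta+2$, and feeds this into the segment sum of Lemma~2 to get $l(P)\ge r$.

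Two specific points where your sketch would not close. First, the elementary insertion fact (an off-path vertex misses one of any two consecutive $v_i,v_{i+1}$) gives at best $d_P(u)\le\lceil\ell/2\rceil+1$ for each off-path $u$; this is much too weak to control $\sum_{v\in S}d(v)$ once edges inside components of $G-P$ are present. The strong-attachment structure (Lemma~1(1)--(2)) is what replaces this: it pins down exactly which path vertices see which component vertices and lets the long detour through $H$ be charged against a single segment. Second, the ``favourable regime'' where every $v\in S$ has $d(v)\ge\bar d$ is essentially vacuous (it forces all such degrees to be equal) and does not simplify the induction; invoking Theorem~1 there is harmless but buys nothing.
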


The following graph {shows} that {one} cannot {replace} the
minimum degree condition in Theorem 2 by {the} average degree condition.
Let $H$ be a complete graph on $n-1$ vertices and $x,z\in V(H)$. Let
$G$ be a graph obtained from $H$ by adding a new vertex $y$ and two
edges $xy,yz$. Then the {length of the longest $(x,y,z)$-path in $G$ is
2}, less than the average degree of the vertices other than $x$ and
$z$ when $n\geq 5$.

In this paper, we first generalize Theorem 3 to $k$-connected graphs
and get the following result.

\begin{theorem}
Let $G$ be a $k$-connected graph with $k\geq 2$, and $x$ and $z$
be two distinct vertices of $G$. If the average degree of the
vertices other than $x$ and $z$ is at least $r$, then for any subset $Y$ of
$V(G)$ with $|Y|= k-2$, $G$ contains an $(x,Y,z)$-path of
length at least $r$.
\end{theorem}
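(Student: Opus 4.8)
The plan is to argue by induction on the connectivity $k$, peeling off one specified vertex at each step. For the base case $k=2$ we have $|Y|=0$, so the claim is exactly Fan's Theorem 3, which we may invoke. For the inductive step, assume the statement for $(k-1)$-connected graphs (with $k-3$ specified vertices) and let $G$ be $k$-connected with $k\ge 3$, average degree over $V(G)\setminus\{x,z\}$ at least $r$, and $Y$ with $|Y|=k-2\ge 1$; we may assume $x,z\notin Y$. Fix a vertex $y\in Y$ and set $Y'=Y\setminus\{y\}$, so that $|Y'|=k-3=(k-1)-2$.

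The reduction I would try first is vertex deletion. The graph $G-y$ is $(k-1)$-connected, and $|Y'|=(k-1)-2$, so the induction hypothesis applies to $G-y$ and yields an $(x,Y',z)$-path $P'$ whose length is at least $r'$, the average degree of $G-y$ over $V(G)\setminus\{x,z,y\}$. Every vertex of $Y'$ still lies on $P'$. It then remains to \emph{reinsert} $y$: if $y$ has two consecutive neighbours $v_i,v_{i+1}$ on $P'$, replacing the edge $v_iv_{i+1}$ by the detour $v_i\,y\,v_{i+1}$ produces an $(x,Y,z)$-path in $G$ one longer than $P'$; more generally one attaches $y$ through a short detour, using the many neighbours of $y$ together with the $(k-1)$-connectivity of $G-y$ to reroute without dropping any vertex of $Y'$.

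The step I expect to be the main obstacle is reconciling the drop in average degree caused by deleting $y$ with the gain in length obtained by reinserting it. Deleting $y$ lowers the degree sum over the relevant vertices by roughly $2d(y)$ while shrinking the vertex count by one, so $r'$ can be substantially smaller than $r$ when $d(y)$ is large; on the other hand a single reinsertion buys only one extra unit of length. Thus the naive balance $r'+1\ge r$ fails precisely for high-degree specified vertices. Closing this gap is the crux: one must either choose $y$ so as to control the loss (and treat separately the case in which every vertex of $Y$ has large degree, exploiting its abundant neighbours to route a correspondingly longer detour through $y$), or else abandon the reduction in favour of a direct extremal argument.

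If the inductive accounting cannot be made to balance, the fallback is a direct Erd\H{o}s--Gallai/Fan style argument. Writing $n=|V(G)|$, take a \emph{longest} $(x,Y,z)$-path $P=v_0v_1\cdots v_p$, suppose $p<r$, and bound $\sum_{v\ne x,z}d(v)$ to contradict the hypothesis $\sum_{v\ne x,z}d(v)\ge r(n-2)$. Here each component $C$ of $G-V(P)$ has $|N(C)|\ge k$ attachments on $P$ by $k$-connectivity, and the maximality of $P$ forbids any rerouting through $C$ that would lengthen the path \emph{while still meeting all of $Y$}. The role of the hypothesis $|Y|=k-2$ is exactly that $k=|Y|+2$ attachment points always leave a segment of $P$ free of the $k-2$ obstructing vertices of $Y$ along which to reroute, so the standard degree estimates survive. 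Carrying out this charging in the presence of the $Y$-obstacles, and verifying that every rerouting preserves passage through $Y$, is the technical heart of the argument.
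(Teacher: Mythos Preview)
Your suspicion about the induction on $k$ is well founded: deleting a specified vertex $y$ costs roughly $2d(y)-r$ in the relevant degree sum while a single reinsertion recovers only one unit of length, and there is no clean way to close this gap in general. The paper does not attempt this route.

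Your fallback is the correct direction and is in fact what the paper does, but your sketch stops short of the substantial machinery needed. The paper takes a longest $(x,Y,z)$-path $P$ and argues by induction on $|V(G-P)|$, not by a global degree-sum contradiction. For a component $H$ of $G-P$ one uses Fan's notion of a \emph{maximum strong attachment} $T=\{u_1,\dots,u_t\}\subset N_P(H)$; Lemma~1 gives $t\ge\min\{k,|V(H)|+d_2\}$. The key counting inequality (Lemma~2) is
\[
l(P)\ \ge\ \sum_{u_i\in T_r} d_H^*(u_i,u_{i+1})\ +\ 2(s+t-t_r)-\theta,
\]
where $T_r$ collects those $u_i$ with $Y\cap V(P(u_i,u_{i+1}))=\emptyset$, $s=|N_P(H)\setminus T|$, and $\theta=|\{x,z\}\cap N_P(H)|$. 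Since $|Y|=k-2$ and $t\ge k$, at least one ``transitive'' pair $\{u_p,u_{p+1}\}$ exists---this is exactly your free-segment observation, made quantitative. One then proves $d_H^*(u_p,u_{p+1})\ge r-2s-2t+\theta+2$ by applying Fan's Theorem~3 to the $2$-connected graph on $V(H)\cup\{u_p,u_{p+1}\}$ (or on $V(B)\cup\{u_p\}$ for an end-block $B$ when $H$ is separable), having first discarded $H$ by the induction hypothesis whenever removing or contracting it does not push the average degree below $r$.

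The separable case is where the real work lies and is not something a generic charging argument will handle: the paper needs an additional structural lemma (Lemma~3) guaranteeing at least $\min\{k-1,m+d_2'\}$ strong attached pairs meeting a given end-block, followed by a case analysis on whether some transitive pair is a ``best'', ``better'', or merely ``good'' pair relative to that block, with a separate estimate in each case. So your high-level picture is right, but the execution goes through Fan's strong-attachment framework applied locally rather than through a direct Erd\H{o}s--Gallai degree count on $G$.
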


We {postpone} the proof of Theorem 4 to Section 3.

Now we consider {long cycles} passing through specified vertices
in graphs. Theorem 5 shows the existence of long cycles {in}
2-connected graph {under the} minimum degree condition, and Theorem 6 extends
Theorem 5 to $k$-connected graphs.

\begin{theorem}[{Locke \cite{Locke}}]
Let $G$ be a 2-connected graph. If the minimum degree of $G$ is at
least $d$, then for any two vertices $y_1$ and $y_2$ of $G$, $G$
contains either a $\{y_1,y_2\}$-cycle of length at least $2d$ or a
Hamilton cycle.
\end{theorem}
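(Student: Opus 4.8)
The plan is to argue by extremality on a longest cycle through $y_1$ and $y_2$, combining a rerouting argument with the minimum-degree hypothesis. First I would observe that such a cycle exists at all: since $G$ is $2$-connected, Menger's theorem provides two internally disjoint $(y_1,y_2)$-paths, whose union is a $\{y_1,y_2\}$-cycle. So let $C$ be a \emph{longest} $\{y_1,y_2\}$-cycle and fix an orientation of $C$. If $C$ is a Hamilton cycle we land in the second alternative and are done, so I assume $C$ is not Hamilton; it then suffices to prove $|C|\ge 2d$.

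The heart of the argument is the following ``arc-doubling''. Suppose a vertex $u\notin V(C)$ has neighbours $x_1,\dots,x_t$ on $C$, listed in the cyclic order induced by the orientation, and let $R_i$ be the arc of $C$ from $x_i$ to $x_{i+1}$ (indices mod $t$) whose interior contains no other $x_j$. For each $i$ I form the cycle $C_i$ obtained from $C$ by deleting the interior of $R_i$ and inserting the detour $x_i\,u\,x_{i+1}$ of length $2$. If the interior of $R_i$ contains neither $y_1$ nor $y_2$, then $C_i$ is again a $\{y_1,y_2\}$-cycle with $|C_i|=|C|-|R_i|+2$, so maximality of $C$ forces $|R_i|\ge 2$. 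If instead the interior of $R_i$ contains $y_1$ or $y_2$, then it already has an interior vertex, so $|R_i|\ge 2$ automatically. In either case every arc has length at least $2$, and since the arcs partition the edges of $C$ we get $|C|=\sum_{i=1}^{t}|R_i|\ge 2t$. Hence, as soon as we exhibit an off-cycle vertex seeing at least $d$ vertices of $C$, we obtain $|C|\ge 2d$. The same computation applies if, instead of a single $u$, we use the endpoints on $C$ of a fan emanating from $u$: between consecutive endpoints we reroute through $u$ along two fan paths, again producing a detour of length at least $2$.

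The main obstacle is precisely to guarantee $t\ge d$, equivalently a fan from an off-cycle vertex to $C$ with at least $d$ endpoints. This is immediate when the component $H$ of $G-V(C)$ containing $u$ is a single vertex, for then all $d(u)\ge d$ neighbours of $u$ lie on $C$. In general $u$ may have many neighbours inside $H$, so I would take a maximum fan from $u$ to $C$ and let $S$ be its set of endpoints; by Menger's theorem $|S|$ equals the size of a smallest $u$--$C$ separator, hence $|S|\ge 2$. If $|S|\ge d$, the arc-doubling argument finishes the proof. The delicate case is $|S|<d$: then $S$ is a small cut hiding the side $D\supseteq\{u\}$ of $G-S$, every vertex of $D$ has all its neighbours in $D\cup S$, and so $|D|\ge d-|S|+1$ is large. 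Resolving this case is the crux, and it requires using $2$-connectivity to route a sufficiently long detour through the ``deep'' set $D$ between two vertices of $S$, thereby producing a $\{y_1,y_2\}$-cycle that contradicts the maximality of $C$. This step is, in essence, a specified-vertex refinement of the classical circumference bound $\min(n,2\delta)$ of Dirac, and is where the interplay between the minimum-degree condition and the connectivity must be pushed hardest.
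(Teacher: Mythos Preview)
The paper does not prove this theorem at all; Theorem~5 is quoted as a known result with a citation to Locke and is used only as motivation for Theorems~6 and~8. There is therefore no ``paper's own proof'' to compare against.

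On the merits of your proposal: the arc-doubling step is correct and cleanly gives $|C|\ge 2t$ whenever an off-cycle vertex (or a fan from one) meets $C$ in $t$ points, and your treatment of the arcs whose interior contains $y_1$ or $y_2$ is fine. The genuine gap is precisely where you say it is, and you do not close it. In the case $|S|<d$ you write that one should ``route a sufficiently long detour through the deep set $D$ between two vertices of $S$''; but that sentence is a restatement of what must be proved, not an argument. When the component $H$ of $G-V(C)$ attaches to $C$ in as few as two vertices, the entire content of the theorem is that the minimum-degree hypothesis forces a long $(s_1,s_2)$-path inside $H\cup\{s_1,s_2\}$, long enough that substituting it for an arc of $C$ either yields length $\ge 2d$ or contradicts maximality---and one must do this while keeping both $y_1$ and $y_2$ on the resulting cycle. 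Establishing that requires a real structural lemma (in spirit an Erd\H{o}s--Gallai/Enomoto-type path bound applied to a suitable $2$-connected auxiliary graph), together with a careful choice of which arc to replace so that neither $y_i$ is lost. Your outline stops exactly at the threshold of this work, so as written it is a plan rather than a proof.
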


\begin{theorem}[Egava, Glas and Locke \cite{Egava_Glas_Locke}]
Let $G$ be a $k$-connected graph with $k\geq 2$. If the minimum
degree of $G$ is at least $d$, then for any subset $Y$ of $V(G)$
with $|Y|=k$, $G$ contains either a $Y$-cycle of length at least
$2d$ or a Hamilton cycle.
\end{theorem}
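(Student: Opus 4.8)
The plan is to argue by contradiction from an extremal cycle. Suppose $G$ has no Hamilton cycle; I then want to produce a $Y$-cycle of length at least $2d$. Since $G$ is $k$-connected and $|Y|=k$, the fan form of Menger's theorem (equivalently, the classical fact that any $k$ vertices of a $k$-connected graph lie on a common cycle) guarantees that a $Y$-cycle exists, so the family of $Y$-cycles is nonempty. Among all $Y$-cycles I would fix one of \emph{maximum length}, call it $C$, and assume for contradiction that $|C|<2d$. As $C$ is not Hamiltonian, there is a component $F$ of $G-V(C)$, and in particular a vertex $u\notin V(C)$.

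The next step is to combine the degree bound with the connectivity along $C$. By $k$-connectivity there are $k$ internally disjoint paths joining $F$ to $C$, ending at distinct vertices $c_1,\dots,c_k$ of $C$; listing these in cyclic order organizes $C$ into arcs. Following the Dirac--P\'osa circumference technique, I would examine the neighbours on $C$ of $u$ and of the attachment endpoints: since every vertex has degree at least $d$ while $|C|<2d$, a pigeonhole argument on the cyclic positions of these neighbours forces a pair in ``crossing'' position, and crossing neighbours let me splice a segment of $C$ through $F$ to build a strictly longer cycle. Provided this new cycle still meets every vertex of $Y$, it contradicts the maximality of $C$. This reroute-and-extend step is the engine of the argument, and it is what ultimately forces $|C|\geq 2d$.

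The main obstacle is keeping every rerouting \emph{$Y$-preserving}: a careless extension may expel a specified vertex from the cycle, so the splicing must always be chosen to retain all of $Y$. This is precisely where matching $|Y|=k$ to the connectivity $k$ is decisive --- the $k$ vertices of $Y$ cut $C$ into $k$ arcs, and having $k$ disjoint connections from $F$ lets me reroute within a single arc so that no prescribed vertex is ever deleted. It is tempting instead to induct on $k$ using Theorem 5 (Locke) as the base case $k=2$: delete $y_k\in Y$, apply the hypothesis to the $(k-1)$-connected graph $G-y_k$ with the $(k-1)$-set $Y\setminus\{y_k\}$, then reinsert $y_k$ between two consecutive neighbours on the cycle obtained. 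I expect this to be an obstacle rather than a shortcut, because each deletion lowers the minimum degree by one and a single reinsertion yields only about $2(d-1)+1=2d-1$ --- and even that needs two consecutive neighbours of $y_k$, which need not exist. Hence the sharp constant $2d$ should be extracted from the direct extremal argument rather than from induction.
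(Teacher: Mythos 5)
First, a point of comparison: the paper does not prove this statement at all --- Theorem 6 is quoted from Egawa, Glass and Locke \cite{Egava_Glas_Locke} and used as background --- so your proposal has to stand on its own, and as written it is a sketch with a genuine gap rather than a proof.

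The gap is in your ``engine'' step and, specifically, in the $Y$-preservation mechanism you rely on. You assert that since the $k$ vertices of $Y$ cut the maximum $Y$-cycle $C$ into $k$ arcs and $F$ sends $k$ disjoint paths to $C$, you can always ``reroute within a single arc so that no prescribed vertex is ever deleted.'' But pigeonhole does not give this: with exactly $k$ attachment vertices and $k$ arcs, the extremal possibility is one attachment per arc, i.e., attachments alternating with the vertices of $Y$ along $C$, and then every splice through $F$ bypasses a segment containing a vertex of $Y$, so no $Y$-preserving reroute exists at all. This is not a marginal case; it is exactly the tight case of the theorem, and it is realized by the example the paper itself gives after Theorem 8: take $H$ complete on $n-k$ vertices, $Y=\{v_1,\dots,v_k\}$ disjoint from $H$, and join each $v_j$ to fixed vertices $u_1,\dots,u_k\in V(H)$. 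There the longest $Y$-cycle is $v_1u_1v_2u_2\cdots v_ku_kv_1$ of length exactly $2k=2d$ (the minimum degree is $d=k$, attained by the $v_j$), the component $F=H-\{u_1,\dots,u_k\}$ attaches to $C$ precisely at $u_1,\dots,u_k$ --- one per arc --- and no extension whatsoever is possible even though $F$ is enormous. So in the alternating configuration the bound $2d$ must come from a counting argument about degrees and attachment structure, not from extension; that is precisely the hard part of Egawa--Glass--Locke's argument and it is absent from your sketch. Two further, smaller gaps: the crossing/pigeonhole step itself is only named (``Dirac--P\'osa technique''), not carried out; and even when two attachments do lie in a common arc, splicing produces a \emph{longer} cycle only if the path through $F$ between them exceeds the bypassed segment in length, which again requires an argument (degrees inside $F$) that you do not supply. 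Your closing remarks about why induction on $k$ via Theorem 5 loses a constant are reasonable, but they do not repair the main argument.
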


On the existence of long cycles in graphs with a given {number of edges},
Erd\"os and Gallai gave the following {result}.

\begin{theorem}[Erd\"os and Gallai \cite{Erdos_Gallai}]
Let $G$ be a 2-edge-connected graph on $n$ vertices. Then $G$
contains a cycle of length at least $\frac{2e(G)}{n-1}$.
\end{theorem}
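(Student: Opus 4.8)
The plan is to reduce to the $2$-connected case by a block decomposition and then apply Fan's theorem (Theorem 3) to a single well-chosen edge. First I would set up the reduction. Since $G$ is $2$-edge-connected it has no bridges, so every block of $G$ is a $2$-connected subgraph on at least three vertices; write $B_1,\dots,B_t$ for the blocks and $n_i=|V(B_i)|$. The blocks partition the edge set, so $e(G)=\sum_{i}e(B_i)$; a standard property of the block tree of a connected graph gives $\sum_{i}(n_i-1)=n-1$; and every cycle of $G$ lies inside a single block, so, writing $c(H)$ for the length of a longest cycle of $H$, we have $c(G)=\max_i c(B_i)$. Suppose I have already proved the theorem for each $2$-connected block, i.e. $e(B_i)\le \tfrac12 c(B_i)(n_i-1)$. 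Then
\begin{equation*}
e(G)=\sum_{i}e(B_i)\le \frac{1}{2}\sum_{i}c(B_i)(n_i-1)\le \frac{c(G)}{2}\sum_{i}(n_i-1)=\frac{c(G)(n-1)}{2},
\end{equation*}
which rearranges to $c(G)\ge 2e(G)/(n-1)$. Hence it suffices to treat a $2$-connected graph, which for clarity I now call $H$, with $p:=|V(H)|$ vertices and $q:=e(H)$ edges, and to produce a cycle of length at least $2q/(p-1)$.

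For the $2$-connected case I would feed an edge into Fan's theorem. Fix an edge $xz\in E(H)$. By Theorem 3, $H$ contains an $(x,z)$-path $P$ whose length is at least the average degree of the vertices other than $x$ and $z$, namely
\begin{equation*}
|E(P)|\ \ge\ \frac{1}{p-2}\sum_{v\ne x,z}d(v)\ =\ \frac{2q-d(x)-d(z)}{p-2}.
\end{equation*}
Because $H$ is $2$-connected this length is at least $2$, so $P$ does not consist of the edge $xz$ alone and therefore does not use $xz$; thus $P$ together with $xz$ is a cycle, of length at least $1+\dfrac{2q-d(x)-d(z)}{p-2}$. Clearing denominators shows that the inequality
\begin{equation*}
1+\frac{2q-d(x)-d(z)}{p-2}\ \ge\ \frac{2q}{p-1}
\end{equation*}
is \emph{equivalent} to $d(x)+d(z)\le (p-2)+\dfrac{2q}{p-1}$. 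So the whole problem comes down to finding one edge whose endpoints have degree-sum at most $(p-2)+2q/(p-1)$.

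This last step is the crux, and I expect it to be the main obstacle, since in a dense graph every individual edge can carry a large degree-sum; the point is that one need only beat the \emph{average}. Here I would invoke de Caen's inequality $\sum_{v}d(v)^2\le q\bigl(\tfrac{2q}{p-1}+p-2\bigr)$. Counting each edge from both ends gives the identity $\sum_{v}d(v)^2=\sum_{xz\in E(H)}\bigl(d(x)+d(z)\bigr)$, so the average of $d(x)+d(z)$ over the $q$ edges of $H$ is at most $(p-2)+2q/(p-1)$; in particular some edge attains a degree-sum at most this average, which is exactly the edge required above. De Caen's inequality is elementary (a short Cauchy–Schwarz estimate), so no machinery beyond Theorem 3 is needed. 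Combining this choice of edge with the displayed computation produces a cycle of length at least $2q/(p-1)$ in $H$, which completes the $2$-connected case and, via the block reduction, establishes the theorem. I would pay particular attention to the degenerate small cases (for instance verifying $p\ge 3$ and $\delta\ge 2$ so that the denominators and the length-at-least-$2$ claim are valid), but these are routine.
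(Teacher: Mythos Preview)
Your argument is correct, but it differs from the paper's route. The paper does not prove Theorem~7 directly (it is cited as a classical result), yet its proof of Theorem~8 specialises at $k=2$ to a proof of the $2$-connected case, and that is the natural point of comparison.

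The main divergence is in how one converts Fan's path theorem into a cycle bound. You fix an \emph{edge} $xz$, apply Theorem~3 in $H$, and close up with $xz$; the resulting bound $1+\frac{2q-d(x)-d(z)}{p-2}$ depends on the edge, forcing you to locate one with small degree-sum via de~Caen's inequality. The paper instead picks an arbitrary \emph{vertex} $y$, adjoins a clone $y'$ with $N(y')=N(y)$, and applies Theorem~3 in the enlarged graph to the pair $y,y'$: the $(y,y')$-path produced has length at least
\[
\frac{2e(G')-d(y)-d(y')}{(n+1)-2}\;=\;\frac{2e(G)+2d(y)-2d(y)}{\,n-1\,}\;=\;\frac{2e(G)}{n-1},
\]
and replacing its last edge $uy'$ by $uy$ yields the cycle. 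The degree terms cancel identically, so no averaging device or auxiliary inequality is needed, and the argument goes through for \emph{every} choice of $y$ (which is what makes the method extend to $Y$-cycles in Theorem~8). On the other hand, your block decomposition genuinely earns the $2$-edge-connected hypothesis as stated, whereas the paper's Theorem~8 assumes $2$-connectedness throughout; so your proof covers a case the paper's own machinery does not directly address.
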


In this paper, as an application of {{Theorems 4}}, we give {the
following theorem} on long cycles passing through specified
vertices of graphs with a given {number of edges}.

\begin{theorem}
Let $G$ be a $k$-connected graph on $n$ vertices with $k\geq 2$.
Then for any subset $Y$ of $V(G)$ with $|Y|=k-1$, $G$ contains a
$Y$-cycle of length at least $\frac{2e(G)}{n-1}$.
\end{theorem}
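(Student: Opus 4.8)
The plan is to realize the desired $Y$-cycle as an $(x,Y',z)$-path supplied by Theorem 4, closed into a cycle by a single extra edge $xz$. Since $|Y|=k-1$ exceeds the number $k-2$ of internal vertices that Theorem 4 can prescribe, at least one vertex of $Y$ must be an endpoint of the path; accordingly I fix an arbitrary $x\in Y$ to be one endpoint and search for a suitable neighbour $z\in N(x)$ for the other. If $z\notin Y$ I set $Y'=Y\setminus\{x\}$, and if $z\in Y$ I set $Y'=(Y\setminus\{x,z\})\cup\{w\}$ for an arbitrary $w\notin Y$ (such a $w$ exists since a $k$-connected graph has $n\ge k+1>|Y|$ vertices). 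In either case $|Y'|=k-2$ and $Y'\subseteq V(G)\setminus\{x,z\}$, so Theorem 4 produces an $(x,Y',z)$-path $P$ of length at least $r=\frac{2e(G)-d(x)-d(z)}{n-2}$, the average degree of the vertices other than $x$ and $z$. Adding the edge $xz$ yields a cycle through $\{x,z\}\cup Y'\supseteq Y$ of length at least $r+1$.

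A direct computation shows that $r+1\ge\frac{2e(G)}{n-1}$ is equivalent to the single inequality
\[
 d(x)+d(z)\le \frac{2e(G)}{n-1}+(n-2).
\]
Hence everything reduces to the main step: showing that for the fixed $x$ one can always choose a neighbour $z$ obeying this bound. This is where I expect the real work to lie, and I would isolate it as a lemma: in any $2$-connected graph $G$ and for any vertex $x$, some $z\in N(x)$ satisfies the displayed inequality. ($2$-connectivity is available since $k\ge 2$, and is used only through $\delta(G)\ge 2$.)

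To prove the lemma I would argue by contradiction, assuming that every $z\in N(x)$ satisfies $d(z)>\theta:=\frac{2e(G)}{n-1}+(n-2)-d(x)$. Writing $S=2e(G)$ and $d_x=d(x)$, two opposing estimates on $\sum_{z\in N(x)}d(z)$ squeeze $S$ into an empty range. On the one hand, each neighbour degree is at most $n-1$, so $\theta<n-1$, which already forces the upper bound $S<(n-1)(d_x+1)$. On the other hand, splitting $V(G)$ into $\{x\}$, $N(x)$, and the remaining $n-1-d_x$ vertices and using $\delta(G)\ge 2$ on the latter gives $\sum_{z\in N(x)}d(z)\le S+d_x-2(n-1)$; together with the contradiction hypothesis this produces a matching lower bound on $S$. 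A short calculation shows the two bounds are incompatible, their consistency requiring $d_x>n-1$, which is impossible; the borderline universal case $d_x=n-1$ is handled by the same two estimates (or directly, since then a minimum-degree vertex of $V(G)\setminus\{x\}$ serves as $z$).

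Finally I would dispose of the degenerate situations: the closing construction needs $P$ to have at least two edges so that $xz$ is a genuine chord, which holds whenever $r\ge 2$; and if $r<2$ then the target satisfies $\frac{2e(G)}{n-1}\le r+1<3$, so any $Y$-cycle meets the bound, and such a cycle exists because a $k$-connected graph contains a cycle through any $k-1$ specified vertices. The crux of the whole argument is the degree-sum lemma above; once it is in hand, the passage from the path of Theorem 4 to the required $Y$-cycle is entirely routine.
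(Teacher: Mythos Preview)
Your argument is correct, and the degree-sum lemma you isolate does hold: the two estimates indeed collide, since (with your notation) the lower bound from $\sum_{z\in N(x)}d(z)>d_x\theta$ combined with $\sum_{z\in N(x)}d(z)\le S+d_x-2(n-1)$ exceeds the upper bound $S<(n-1)(d_x+1)$ by exactly $(n-1)-d_x>0$ when $d_x<n-1$, and the case $d_x=n-1$ is handled as you say. The degenerate case $r<2$ is also fine, resting on Dirac's classical theorem that a $k$-connected graph has a cycle through any $k$ prescribed vertices.

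The paper, however, proceeds quite differently and avoids your lemma altogether. It fixes $y\in Y$, builds $G'$ from $G$ by adding a \emph{twin} $y'$ with $N_{G'}(y')=N_G(y)$, observes that $G'$ is still $k$-connected on $n+1$ vertices, and applies Theorem~4 to obtain a $(y,\,Y\setminus\{y\},\,y')$-path $P$ in $G'$; since $d_{G'}(y)=d_{G'}(y')=d_G(y)$ and $e(G')=e(G)+d_G(y)$, the average degree off $\{y,y'\}$ is exactly $\dfrac{2e(G)}{n-1}$, so $l(P)\ge \dfrac{2e(G)}{n-1}$ on the nose. Replacing the last edge $uy'$ by $uy$ yields the desired $Y$-cycle in $G$. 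The vertex-doubling trick is what buys the clean arithmetic: by making the two endpoints have identical degree, the denominator becomes $(n+1)-2=n-1$ and the numerator telescopes to $2e(G)$ with no slack. Your route trades that trick for an explicit search for a good neighbour $z$, which costs you the auxiliary lemma but keeps the whole argument inside $G$; the paper's route is shorter and needs no side computation, at the price of passing to an auxiliary graph.
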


In Theorem 8, one cannot expect a cycle passing through $k$ specified vertices of length at least $2e(G)/(n-1)$. Let $H$ be a complete graph on $n-k$ vertices with $n>3k$ and $u_1,u_2,\ldots,u_k$ be $k$ vertices of $H$. Let $Y=\{v_1,v_2,\ldots,v_k\}$ be a set of vertices not in $V(H)$. We construct a graph $G$ with $V(G)=V(H)\cup Y$ and $E(G)=E(H)\cup\{u_iv_j:1\leq i,j\leq k\}$. Then $G$ is a $k$-connected graph and the longest $Y$-cycle has length $2k$, which is less than
$$
\frac{2e(G)}{n-1}=\frac{(n-k)(n-k-1)+2k^2}{n-1}.
$$

We {postpone} the
proof of Theorem 8 in Section 4.

\section{Preliminaries}

{Let $G$ be a graph and {$P$, $H$} two disjoint subgraphs of $G$.
We use $E(P,H)$ to denote the set, and $e(P,H)$ the number, of edges
{with one vertex in $P$ and the other in $H$}. If
{$E(P,H)\neq\emptyset$}, then we call $P$ and $H$ are \emph{joined}.
We use $N_P(H)$ to denote the set of vertices in $P$ which are joined
to $H$. If {$x$ is a vertex} in $G-P$, we say that
$x$ is \emph{locally $k$-connected} to $P$ (in $G$) if there are $k$
paths connecting $x$ to vertices in $P$ such that any two of them
have only the vertex $x$ in common. We say that $H$ is \emph{locally $k$-connected} to $P$ (in
$G$) if for every vertex $x\in V(H)$, $x$ is locally $k$-connected
to $P$. Note that if $H$ is locally $k$-connected to $P$, then $H$
is locally $l$-connected to $P$ for all $l$, $0\leq l\leq k$; and,
if $G$ is $k$-connected and $|V(P)|\geq k$, then $H$ is locally $k$-connected to $P$ in
$G$.}

The following propositions on {local} $k$-connectedness are proved
in \cite{Fan}.

\begin{prop}[Fan \cite{Fan}]
Let $H$ and $P$ be two disjoint subgraphs of a graph $G$. If $H$ is
locally $k$-connected to $P$ in the subgraph induced by $V(H)\cup
V(P)$, then $E(P,H)$ contains an independent set of $t$ edges, where
$t\geq\min\{k,|V(H)|\}$.
\end{prop}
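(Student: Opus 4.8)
The plan is to recast the conclusion as a statement about a matching in an auxiliary bipartite graph and then to certify the matching's size by Menger's theorem. First I would form the bipartite graph $B$ whose two vertex classes are $V(H)$ and $V(P)$ and whose edge set is exactly $E(P,H)$; then an independent set of $t$ edges inside $E(P,H)$ is precisely a matching of size $t$ in $B$. So it suffices to show that the matching number $\nu(B)$ satisfies $\nu(B)\geq\min\{k,|V(H)|\}$.

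Suppose, for a contradiction, that $\nu(B)<\min\{k,|V(H)|\}$, so that $\nu(B)<k$ and $\nu(B)<|V(H)|$ hold simultaneously. By K\"onig's theorem, $B$ admits a vertex cover $U$ with $|U|=\nu(B)$, meaning every edge of $E(P,H)$ has an endpoint in $U$. Since $U\cap V(H)\subseteq U$ and $|U|=\nu(B)<|V(H)|$, I can pick a vertex $x\in V(H)\setminus U$.

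The crux is to upgrade $U$, which a priori only meets the \emph{direct} edges of $E(P,H)$, into a genuine $(x,P)$-separator in the whole induced subgraph $G[V(H)\cup V(P)]$. For this I would take any $(x,P)$-path $Q$ in $G[V(H)\cup V(P)]$ and inspect its first vertex lying in $V(P)$; the edge of $Q$ entering that vertex has its other endpoint in $V(H)$, hence belongs to $E(P,H)$ and is covered by $U$. Because $x\notin U$, the covered endpoint appearing on $Q$ is necessarily different from $x$, so every $(x,P)$-path meets $U$ at a vertex other than $x$. Thus $U$ is an $(x,P)$-separator avoiding $x$, of size $|U|=\nu(B)<k$.

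Finally, local $k$-connectedness asserts that $x$ is joined to $P$ by $k$ paths meeting pairwise only in $x$, equivalently ending at $k$ distinct vertices of $P$; by the fan form of Menger's theorem this is incompatible with the existence of an $(x,P)$-separator of fewer than $k$ vertices that avoids $x$. This contradiction yields $\nu(B)\geq\min\{k,|V(H)|\}$, which is the desired bound. I expect the only delicate point to be the ``first crossing edge'' step: the paths witnessing local connectivity may oscillate between $H$ and $P$, so one must argue carefully that a cover of merely the direct crossing edges already blocks every such path; once that is in place, the reduction to $B$ and the appeal to Menger are routine.
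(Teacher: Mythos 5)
Your proof is correct, but there is nothing in the paper to compare it against: the paper states this proposition without proof, explicitly deferring to Fan's original article (``The following propositions on local $k$-connectedness are proved in \cite{Fan}''). Taken on its own terms, your argument is sound and self-contained. The reduction to a matching in the bipartite graph $B$, the use of K\"{o}nig's theorem to obtain a vertex cover $U$ with $|U|=\nu(B)$, the choice of $x\in V(H)\setminus U$ (available precisely because $\nu(B)<|V(H)|$ under the contradiction hypothesis), and the ``first crossing edge'' step all check out: any path from $x$ to $P$ inside the induced subgraph on $V(H)\cup V(P)$ has a first vertex in $V(P)$, the edge entering that vertex lies in $E(P,H)$ and is therefore covered by $U$, and since $x\notin U$ the covering endpoint is a vertex of the path other than $x$. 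This is exactly the point where oscillation of the connecting paths between $H$ and $P$ could have caused trouble, and you resolve it correctly; the argument applies verbatim to each of the $k$ paths of the fan, since each ends in $P$ and hence has a first vertex in $V(P)$. One simplification is worth noting: you do not need the fan form of Menger's theorem, whose nontrivial direction is the converse of what you use. The easy counting direction suffices: the $k$ paths guaranteed by local $k$-connectedness pairwise share only $x$, each contains a vertex of $U=U\setminus\{x\}$, and these vertices are distinct across distinct paths, so $|U|\geq k$, contradicting $|U|=\nu(B)<k$. With that observation, your proof rests on K\"{o}nig's theorem plus elementary counting, which is a perfectly good (and pleasantly short) substitute for the external citation the paper relies on.
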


\begin{prop}[Fan \cite{Fan}]
Let $H$ and $P$ be two disjoint subgraphs of a graph G. Let $u\in
N_P(H)$ and $G'$ be the graph obtained from $G$ by deleting all
edges from $u$ to $H$. If $H$ is locally $k$-connected to $P$ in
$G$, then $H$ is locally $(k-1)$-connected to $P$ in $G'$.
\end{prop}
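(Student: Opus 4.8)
The plan is to argue vertex by vertex and to exploit the fact that the passage from $G$ to $G'$ removes only edges incident with the single vertex $u$. Fix an arbitrary vertex $x\in V(H)$. Since $H$ is locally $k$-connected to $P$ in $G$, there exist $k$ paths $Q_1,\dots,Q_k$ joining $x$ to vertices of $P$ such that any two of them have only the vertex $x$ in common. I want to show that at least $k-1$ of these paths survive the deletion unchanged, which will exhibit $x$ as being locally $(k-1)$-connected to $P$ in $G'$.

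The key observation is that $u\neq x$: indeed $u\in V(P)$ while $x\in V(H)$, and $P$ and $H$ are disjoint. Because the paths $Q_1,\dots,Q_k$ pairwise share only the vertex $x$, and $u\neq x$, the vertex $u$ can lie on at most one of them; after relabelling we may assume $u$ lies (if at all) only on $Q_k$, and if $u$ lies on none of them the argument below is only easier. Now every edge removed in forming $G'$ joins $u$ to a vertex of $H$, and so is incident with $u$. Consequently, a path that does not pass through $u$ cannot use any removed edge. Therefore each of $Q_1,\dots,Q_{k-1}$, none of which contains $u$, remains a path of $G'$ joining $x$ to a vertex of $P$, and these $k-1$ paths still meet pairwise only in $x$.

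This shows that $x$ is locally $(k-1)$-connected to $P$ in $G'$, and since $x\in V(H)$ was arbitrary, $H$ is locally $(k-1)$-connected to $P$ in $G'$, as required. The only point needing care, and the nearest thing to an obstacle, is the claim that the deletion can destroy at most one of the $k$ paths; this rests entirely on the internal disjointness of the $Q_i$ together with the fact that $u\neq x$, so that $u$ appears on at most one path. Everything else is immediate, since removing edges incident with a single vertex leaves untouched every path that avoids that vertex.
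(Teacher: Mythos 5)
Your proof is correct: since the $k$ paths pairwise meet only in $x$ and $u\in V(P)$ is distinct from $x\in V(H)$, the vertex $u$ lies on at most one of them, and every deleted edge is incident with $u$, so at least $k-1$ of the paths survive intact in $G'$. The paper itself gives no proof of this proposition (it is quoted from Fan~\cite{Fan}), and your argument is precisely the standard one-step argument underlying Fan's original proof, so there is nothing to add.
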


\begin{prop}[Fan \cite{Fan}]
Let $H$ and $P$ be two disjoint subgraphs of a graph $G$, and $B$ a
block of $H$. Let $H'$ be the subgraph obtained from $H$ by
contracting $B$. If $H$ is locally $k$-connected to $P$ in $G$, then
$H'$ is also locally $k$-connected to $P$ in the resulting graph.
\end{prop}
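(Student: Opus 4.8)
The plan is to verify the defining condition vertex by vertex: I will show that every vertex of $H'$ is locally $k$-connected to $P$ in the contracted graph $G'=G/B$ (note that $P$ is unaffected by the contraction, being disjoint from $B$). The vertices of $H'$ are of two kinds: the single vertex $b$ obtained by contracting $B$, and the vertices $y\in V(H)\setminus V(B)$ that survive unchanged. For each of them I will take the $k$ paths to $P$ furnished by the local $k$-connectedness of $H$ in $G$ and push them through the contraction, the only danger being that two of them collapse onto the common vertex $b$.

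First consider $b$. Choose any $x\in V(B)$ and let $Q_1,\dots,Q_k$ be $k$ paths from $x$ to $P$ that pairwise meet only in $x$, which exist because $x\in V(H)$. For each $i$, let $w_i$ be the last vertex of $Q_i$ lying in $V(B)$ and let $Q_i'$ be the segment of $Q_i$ from $w_i$ to its endpoint in $P$; by the choice of $w_i$, the interior of $Q_i'$ avoids $V(B)$. Under the contraction each $Q_i'$ becomes a path from $b$ to $P$ whose vertices other than $b$ are non-$B$ vertices of $Q_i$. Since the $Q_i$ share only $x\in V(B)$, these non-$B$ vertices are pairwise distinct and distinct from $b$, so the images of $Q_1',\dots,Q_k'$ meet only in $b$. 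Hence $b$ is locally $k$-connected to $P$ in $G'$.

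Next consider a vertex $y\in V(H)\setminus V(B)$, and let $R_1,\dots,R_k$ be $k$ paths from $y$ to $P$ meeting only in $y$; we may assume each $R_i$ meets $P$ only in its terminal vertex, so that each $R_i$ lies in $H$ except for that last vertex. The key structural claim is that at most one of the $R_i$ meets $V(B)$. Indeed, since $B$ is a block of $H$ and $y\notin V(B)$, the block--cut-vertex structure of $H$ supplies a cut vertex $c\in V(B)$ that separates $y$ from $V(B)\setminus\{c\}$ in $H$; consequently every path of $H$ running from $y$ into $V(B)$ must pass through $c$. As $R_1,\dots,R_k$ are disjoint apart from $y\neq c$, at most one of them can contain $c$, and hence at most one of them meets $V(B)$. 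Granting the claim, the contraction leaves the $R_i$ avoiding $B$ unchanged (and avoiding $b$) and sends the at most one remaining path into a path through $b$; the resulting $k$ paths still meet only in $y$, so $y$ is locally $k$-connected to $P$ in $G'$. Doing this for every such $y$ completes the argument.

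The step I expect to be the main obstacle is precisely the claim that at most one path meets $V(B)$: this is what prevents the entry points of two distinct paths into $B$ from colliding at $b$ after contraction, and it is exactly here that the hypothesis ``$B$ is a block'' (rather than an arbitrary connected subgraph) enters, through the single cut vertex $c$. Making this rigorous requires that the paths realizing the local connectedness be confined to $H$ until they reach $P$, i.e.\ that one argues inside the subgraph induced by $V(H)\cup V(P)$, in line with the phrasing of Proposition 1; otherwise a path that dives into $B$ through a non-cut vertex from outside $H$ could, after contraction, share $b$ with a second such path and destroy disjointness. I would therefore first reduce to this induced setting, then read off the cut-vertex funneling from the block decomposition, and finally carry out the two path-pushing arguments above.
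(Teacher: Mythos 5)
The paper offers no proof of this proposition at all --- it is quoted from Fan \cite{Fan} with the remark that the proofs are in that paper --- so your attempt has to be judged on its own terms, and it contains one genuine gap, located exactly where you flag it. Your two path-pushing arguments are sound \emph{provided} the witnessing paths stay inside $H$ until their terminal vertex: cutting each $Q_i$ at its last $B$-vertex $w_i$ correctly produces a fan at $b$, and the funnelling of all $H$-paths from $y$ into $B$ through a single cut vertex $c$ is the right use of the block hypothesis. What fails is the repair you propose for the confinement issue, namely to ``first reduce to this induced setting'': no such reduction exists. Under the paper's definition the $k$ paths live in $G$, and local $k$-connectedness to $P$ in $G$ is strictly weaker than local $k$-connectedness in the subgraph induced by $V(H)\cup V(P)$. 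Indeed, read literally (arbitrary disjoint subgraphs, paths free to roam through $G$) the proposition is \emph{false}: let $P$ consist of two isolated vertices $p_1,p_2$, let $H$ have vertex set $\{y,c,a,d\}$ with the edge $yc$ and the triangle $B$ on $\{a,c,d\}$, and add one outside vertex $u$ with edges $yu$, $ua$, $ap_1$, $dp_2$. Every vertex of $H$ is locally $2$-connected to $P$ in $G$ (for $y$ take the paths $yuap_1$ and $ycdp_2$), and $B$ is a block of $H$; but in $G[V(H)\cup V(P)]$ the vertex $y$ is only locally $1$-connected to $P$ (everything goes through $c$), and after contracting $B$ to $b$ the neighbors of $y$ are $b$ and $u$, while $u$'s only other neighbor is $b$, so every path from $y$ to $P$ passes through $b$ and $y$ is not locally $2$-connected to $P$ in the resulting graph.

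The correct repair is not a reduction you can prove but a hypothesis you must import: in Fan's setting, and everywhere this paper actually invokes the proposition (Case 2 of the proof of Theorem 4), $H$ is a component of $G-P$, or arises from one by earlier block contractions, which preserves this property. There the confinement is automatic: truncate each witnessing path at its first vertex of $P$; the truncated paths are still pairwise disjoint apart from $x$ (a shared new endpoint would already be a shared vertex of the original paths), and being internally disjoint from $P$ and starting in the component $H$, they lie in $H$ except for the final vertex. With that hypothesis in place your argument goes through, up to one small touch-up: the image of the single path $R_i$ that meets $B$ is in general a walk revisiting $b$, so you should splice it the same way you handled $b$ itself, following $R_i$ from $y$ to its \emph{first} $B$-vertex and then from its \emph{last} $B$-vertex onward to $P$. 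In short, you found the two right ideas (the last-exit trick and the cut-vertex funnelling), but the confinement you correctly identified as the crux cannot be obtained by a reduction from the stated hypotheses --- as the example above shows, it has to come from the component assumption under which the proposition is applied.
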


Next we introduce the concept of local maximality for paths.

Let $P$ be a path of a graph $G$, and $u,v\in V(P)$. We use $P[u,v]$
to denote the segment of $P$ from $u$ to $v$, and $P(u,v)$ the
segment obtained from $P[u,v]$ by deleting the two end vertices $u$
and $v$. Let $H$ be a component of $G-P$. We say that $P$ is a
\emph{locally longest path with respect to} $H$ if we cannot obtain
a longer path than $P$ by replacing {the} segment $P[u,v]$ by a
$(u,v)$-path with all its internal vertices in $H$. In other words,
$P$ is locally longest with respect to $H$ if, for any $u,v\in
V(P)$,
$$
e(P[u,v])\geq d_H^*(u,v).
$$
{If $P$ is an $(x,Y,z)$-path of $G$, where $x,z\in V(G)$ and
$Y\subset V(G)$, then we} say that $P$ is a \emph{locally longest
$(x,Y,z)$-path with respect to} $H$ if we cannot obtain a longer
$(x,Y,z)$-path than $P$ by replacing {the} segment $P[u,v]$ with $Y\cap
V(P(u,v))=\emptyset$ by a $(u,v)$-path with all its internal
vertices in $H$. Note that if $P$ is a longest path (longest $(x,Y,z)$-path) in a
graph $G$, then, of course, $P$ is a locally longest path (locally
longest $(x,Y,z)$-path) with respect to any component of $G-P$. If two
vertices $u$ and $u'$ in $V(P)$ are joined to $H$ by two independent
edges, then we call $\{u,u'\}$ a \emph{strong attached pair} of $H$
to $P$. A \emph{strong attachment} of $H$ to $P$ (in $G$) is a
subset $T=\{u_1,u_2,\ldots,u_t\}\subset N_P(H)$, where $u_i$, $1\leq
i\leq t$, are in order {along} $P$, such that each {ordered} pair
$\{u_i,u_{i+1}\}$, $1\leq i\leq t-1$, is a strong attached pair of
$H$ to $P$. A strong attachment $T$ of $H$ to $P$ is \emph{maximum}
if it has maximum cardinality over all strong attachments of $H$ to
$P$.

\begin{lemma}[Fan \cite{Fan}]
Let $G$ be a graph and $P$ a path of $G$. Suppose that $H$ is a
component of $G-P$ and $T=\{u_1,u_2,\ldots,u_t\}$ is a maximum
strong attachment of $H$ to $P$. Set $S=N_P(H)\backslash T \mbox{ and }s=|S|$.
Then the following statements are true:\\
(1) Every vertex in $S$ is joined to exactly one vertex in $H$.\\
(2) For each segment $P[u_i,u_{i+1}]$, $1\leq i\leq t-1$, suppose
that
$$
N_P(H)\cap V(P[u_i,u_{i+1}])=\{a_0,a_1,\ldots,a_q,a_{q+1}\},
$$
where $a_0=u_i$, $a_{q+1}=u_{i+1}$ and $a_j$, $0\leq j\leq q+1$, are
in order {along} $P$. Then there is a subscript $m$, $0\leq m\leq
q$, such that
$$
N_H(a_j)=N_H(a_0), \mbox{ for }0\leq j\leq m,
$$
and
$$
N_H(a_j)=N_H(a_{q+1}), \mbox{ for }m+1\leq j\leq q+1.
$$
Besides, if
$$
N_P(H)\cap V(P[x,u_1])=\{a_1,\ldots,a_q,a_{q+1}\},
$$
where, $a_{q+1}=u_1$, then
$$
N_H(a_j)=N_H(a_{q+1}), \mbox{ for }1\leq j\leq q+1;
$$
and if
$$
N_P(H)\cap V(P[u_t,z])=\{a_0,a_1,\ldots,a_q\},
$$
where, $a_0=u_t$, then
$$
N_H(a_j)=N_H(a_0), \mbox{ for }0\leq j\leq q.
$$
(3) If $H$ is locally $k$-connected to $P$ in $G$, then
$$
t\geq\min\{k,h+d_2\},
$$
where $h=|V(H)|$ and $d_2$ is the number of vertices in $N_P(H)$
which has at least two neighbors in $H$.
\end{lemma}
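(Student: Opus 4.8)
For the structural lemma on maximum strong attachments, the proof of all three parts is driven by the maximality of $T$ together with a few elementary observations about strong attached pairs. First, if a vertex $v\in N_P(H)$ has at least two neighbours in $H$, then $\{v,u\}$ is a strong attached pair for \emph{every} $u\in N_P(H)$: pick any neighbour of $u$ in $H$ and then a neighbour of $v$ distinct from it. The same choice shows that any two vertices each having at least two neighbours in $H$ form a strong attached pair. Dually, if $v$ has the single neighbour $w$ in $H$, then $\{v,u\}$ fails to be a strong attached pair precisely when $N_H(u)=\{w\}$. A last remark, used for part (3), is that the $P$-endpoints of any matching between $N_P(H)$ and $V(H)$, listed in their order along $P$, automatically form a strong attachment, since any two matching edges are independent.

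For part (1), suppose some $v\in S$ has at least two neighbours in $H$. Inserting $v$ into $T$ at its position along $P$ creates (at most) two new consecutive pairs, each a strong attached pair by the first remark, so $T\cup\{v\}$ is a strong attachment of size $t+1$, contradicting maximality; hence every vertex of $S$ has exactly one neighbour. For part (2), fix a segment $P[u_i,u_{i+1}]$ whose interior vertices $a_1,\dots,a_q$ each have a single neighbour $w_j$ in $H$ by part (1). Maximality forbids inserting any single $a_j$ between $u_i$ and $u_{i+1}$, so at least one of $\{u_i,a_j\}$, $\{a_j,u_{i+1}\}$ is not a strong attached pair; by the second remark this forces $N_H(a_j)=N_H(u_i)$ or $N_H(a_j)=N_H(u_{i+1})$, and not both (otherwise $N_H(u_i)=N_H(u_{i+1})=\{w_j\}$, contradicting that $\{u_i,u_{i+1}\}$ is a strong attached pair). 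This assigns each interior vertex a type. To rule out interleaving, suppose a right-type vertex $a_j$ preceded a left-type $a_{j'}$; their single neighbours are distinct and distinct from each other's partners, so $\{u_i,a_j\}$, $\{a_j,a_{j'}\}$, $\{a_{j'},u_{i+1}\}$ are all strong attached pairs and inserting both at once yields a strong attachment of size $t+2$. This contradiction produces the threshold $m$. The two boundary segments follow from the same single-insertion argument against $u_1$ and $u_t$, showing that every neighbour before $u_1$ (respectively after $u_t$) shares the neighbourhood of $u_1$ (respectively $u_t$).

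For part (3) I construct a strong attachment of size $\min\{k,h+d_2\}$ directly, so the maximum one is at least this large. If $d_2\geq k$, then any $k$ vertices of $N_P(H)$ having at least two neighbours in $H$, ordered along $P$, already form a strong attachment by the first remark. If $d_2<k$, let $D$ be the set of those $d_2$ vertices; deleting all of their edges to $H$ and applying Proposition 2 once for each shows that $H$ is locally $(k-d_2)$-connected to $P$ in the resulting graph (local connectivity is unaffected by passing to the subgraph induced by $V(H)\cup V(P)$, since $H$ is a component of $G-P$). Proposition 1 then yields a matching of size $\min\{k-d_2,h\}$ that avoids $D$, because the vertices of $D$ now have no edges to $H$; by the third remark its $P$-endpoints give a strong attachment $A$ of that size, disjoint from $D$. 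Reinserting every vertex of $D$ into $A$ at its position along $P$ keeps every newly consecutive pair a strong attached pair by the first remark, so the result is a strong attachment of size $\min\{k-d_2,h\}+d_2=\min\{k,h+d_2\}$.

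The only genuinely delicate point is the non-interleaving step in part (2): the single-insertion argument alone assigns a type to each interior vertex but not an ordering, and one must exhibit the simultaneous two-vertex insertion to exclude an inversion. Everything else reduces to bookkeeping of which pairs are strong attached, for which the elementary remarks in the first paragraph suffice.
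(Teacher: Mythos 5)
Your proof is correct in all three parts, but note that the paper itself contains no proof of this lemma: it is quoted from Fan \cite{Fan}, with only the remark that part (2) is stated slightly differently there but proved similarly. So the fair comparison is against Fan's argument and against the proof of Lemma 3 in this paper, which is patterned on it. Your parts (1) and (2) follow the standard insertion/exchange route and are sound, including the two elementary observations (a doubly-joined vertex forms a strong attached pair with every vertex of $N_P(H)$; a singly-joined vertex $v$ with $N_H(v)=\{w\}$ fails to pair with $u$ exactly when $N_H(u)=\{w\}$) and, crucially, the non-interleaving step: the single-insertion argument only assigns each interior vertex a type, and your simultaneous insertion of an inverted pair $a_j,a_{j'}$ -- with the check that $N_H(u_i)=N_H(u_{i+1})$ would contradict $\{u_i,u_{i+1}\}$ being a strong attached pair, so the three new pairs $\{u_i,a_j\},\{a_j,a_{j'}\},\{a_{j'},u_{i+1}\}$ are all strong -- is exactly the missing piece; the boundary segments indeed reduce to single insertions against $u_1$ and $u_t$. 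Where you genuinely diverge is part (3): the inductive style used in this paper's Lemma 3 (and in Fan's original) deletes the $H$-edges at one doubly-joined vertex via Proposition 2 and then must verify, with some delicacy, that a maximum strong attachment of the reduced graph can be recognized (the $T'=T\backslash\{u_j,u_{j+1}\}$ analysis). You instead delete the edges at all $d_2$ doubly-joined vertices at once, invoke Proposition 1 in the induced subgraph (correctly noting that local connectedness passes to $G[V(H)\cup V(P)]$ because $H$ is a component of $G-P$, so the paths can be truncated at their first $P$-vertex) to get a matching of size $\min\{k-d_2,h\}$ avoiding the deleted set $D$, convert its $P$-endpoints into a strong attachment, and reinsert the vertices of $D$, each of which pairs strongly with everything. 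This direct construction is shorter and avoids all maximality bookkeeping; it proves the bound $t\geq\min\{k-d_2,h\}+d_2=\min\{k,h+d_2\}$ by exhibiting one large strong attachment rather than analysing the given maximum one, which is all the lemma requires.
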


{Lemma 1 (2) is somewhat different from that in \cite{Fan}, but the proofs of them are
similar.}

For a path {$P$}, we use $l(P)$ to denote the length of
$P$.

\begin{lemma}
Let $G$ be a graph, $P$ an $(x,Y,z)$-path of $G$, where $x,z\in
V(G)$ and $Y\subset V(G)$, $H$  a component of $G-P$ and
$T=\{u_1,u_2,\ldots,u_t\}$  a maximum strong attachment of $H$ to
$P$. Set $S=N_P(H)\backslash T \mbox{ and }s=|S|$.
Suppose that $P$ is a locally longest $(x,Y,z)$-path with respect to
$H$, and $\theta=|\{x,z\}\cap N_P(H)|$. Set
$$
T_r=\{u_i\in T\backslash\{u_t\}: Y\cap V(P(u_i,u_{i+1}))=\emptyset\}
\mbox{ and }t_r=|T_r|.
$$
Then
$$
l(P)\geq\sum_{u_i\in T_r}d_H^*(u_i,u_{i+1})+2(s+t-t_r)-\theta.
$$
\end{lemma}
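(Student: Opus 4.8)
The plan is to split $P$ at the strong attachment points and write
$$
l(P)=e(P[x,u_1])+\sum_{i=1}^{t-1}e(P[u_i,u_{i+1}])+e(P[u_t,z]),
$$
bounding each of these pieces from below. The basic observation, used everywhere, is that since $H$ is connected every pair $a,b\in N_P(H)$ satisfies $d_H^*(a,b)\ge 2$ (join $a$ and $b$ through $H$). Consequently, if $a,b\in N_P(H)$ are consecutive along $P$, then $e(P[a,b])\ge 2$: when $V(P(a,b))\cap Y=\emptyset$ this is the local-longest inequality $e(P[a,b])\ge d_H^*(a,b)\ge 2$, and otherwise $P(a,b)$ already contains a vertex of $Y$, forcing $e(P[a,b])\ge 2$ directly.

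The one place where more than $2$ must be extracted is a segment with $u_i\in T_r$. Here I would write $N_P(H)\cap V(P[u_i,u_{i+1}])=\{a_0,\dots,a_{q+1}\}$ with $a_0=u_i$, $a_{q+1}=u_{i+1}$, where $a_1,\dots,a_q$ are the $q=s_i$ vertices of $S$ interior to the segment, and invoke Lemma 1(2) to get an index $m$ with $N_H(a_m)=N_H(u_i)$ and $N_H(a_{m+1})=N_H(u_{i+1})$. Because a longest $(a,b)$-path through $H$ depends only on $N_H(a)$ and $N_H(b)$, we have $d_H^*(a_m,a_{m+1})=d_H^*(u_i,u_{i+1})$; since there is no vertex of $Y$ in the segment, the local-longest property gives $e(P[a_m,a_{m+1}])\ge d_H^*(u_i,u_{i+1})$. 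Bounding the remaining $s_i$ gaps by $2$ each then yields
$$
e(P[u_i,u_{i+1}])\ge d_H^*(u_i,u_{i+1})+2s_i \qquad (u_i\in T_r).
$$
For a middle segment with $u_i\notin T_r$ the crude bound suffices: its $s_i+1$ gaps give $e(P[u_i,u_{i+1}])\ge 2(s_i+1)$.

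For the two end segments I would argue symmetrically. Let $s_L$ be the number of vertices of $S$ lying strictly before $u_1$ on $P$; the $s_L$ gaps between consecutive attachment vertices of $P[x,u_1]$ each have length at least $2$, and the initial gap from $x$ to the first attachment vertex has length at least $1$ unless $x\in N_P(H)$, so $e(P[x,u_1])\ge 2s_L+1-\theta_x$, where $\theta_x=1$ if $x\in N_P(H)$ and $\theta_x=0$ otherwise (the degenerate cases $x=u_1$ and $x\in S$ are checked by hand). The symmetric bound holds at $z$ with $s_R$ and $\theta_z$, and adding them gives a contribution of $2s_L+2s_R+2-\theta$ from the ends.

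Finally I would add the three estimates. Using $s=s_L+s_R+\sum_{i=1}^{t-1}s_i$, splitting the $t-1$ middle segments into the $t_r$ indexed by $T_r$ and the remaining $t-1-t_r$, and simplifying $2(t-1-t_r)+2=2(t-t_r)$, the total adds up to exactly $\sum_{u_i\in T_r}d_H^*(u_i,u_{i+1})+2(s+t-t_r)-\theta$, which is the desired inequality. The main obstacle is the $T_r$-segment estimate: one has to use Lemma 1(2) to pin the entire detour length $d_H^*(u_i,u_{i+1})$ onto a single gap while still charging $2$ to each interior vertex of $S$; the endpoint bookkeeping that produces the $-\theta$ correction, especially when $x$ or $z$ itself lies in $N_P(H)$, is the other delicate point.
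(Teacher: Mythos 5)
Your proposal is correct and follows essentially the same route as the paper's proof: the same decomposition of $P$ at the strong attachment vertices, the same use of Lemma 1(2) to transfer $d_H^*(u_i,u_{i+1})$ onto a single gap of a $T_r$-segment while charging $2$ to every other gap, the same $2-\theta$ bookkeeping at the two ends, and the same final summation. The only thing the paper does that you omit is to dispose of the trivial case $t=0$ (where $s=0$ and the inequality holds vacuously) before assuming the decomposition exists.
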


\begin{proof}
If $t=0$, then $s=0$ and the statement is trivially true. Suppose
now that $t\geq 1$.

Consider a segment $P[u_i,u_{i+1}]$, $1\leq i \leq t-1$. Suppose
that
$$
N_P(H)\cap V(P[u_i,u_{i+1}])=\{a_0,a_1,\ldots,a_q,a_{q+1}\},
$$
where $q=|S\cap V(P[u_i,u_{i+1}])|$, $a_0=u_i$, $a_{q+1}=u_{i+1}$,
and $a_j$, $0\leq j\leq q+1$, are in order {along} $P$.

If $Y\cap V(P(u_i,u_{i+1}))=\emptyset$, then by Lemma {1 (2)}, there is
a subscript $m$, $0\leq m\leq q$, such that
$$
N_H(a_0)=N_H(a_m) \mbox{ and } N_H(a_{q+1})=N_H(a_{m+1}).
$$
Therefore
$$
d_H^*(a_m,a_{m+1})=d_H^*(a_0,a_{q+1})=d_H^*(u_i,u_{i+1}).
$$
Since $P$ is a locally longest $(x,Y,z)$-path with respect to $H$, we have
\begin{align*}
l(P[u_i,u_{i+1}])   &\geq\sum_{j=0}^qd_H^*(a_j,a_{j+1})=d_H^*(a_m,a_{m+1})+
    \sum_{^{j=0}_{j\neq m}}^qd_H^*(a_j,a_{j+1})\\
    &=d_H^*(u_i,u_{i+1})+ \sum_{^{j=0}_{j\neq
    m}}^qd_H^*(a_j,a_{j+1}).
\end{align*}
Note that $d^*_H(a_j,a_{j+1})\geq 2$, for every $j$, $0\leq j\leq
q$, we have
$$
l(P[u_i,u_{i+1}])\geq d_H^*(u_i,u_{i+1})+2q.
$$

If $Y\cap V(P(u_i,u_{i+1}))\neq\emptyset$, then {noting} that
$l(P[a_j,a_{j+1}])\geq 2$, we have
$$
l(P[u_i,u_{i+1}])=\sum_{j=0}^ql(P[a_j,a_{j+1}])\geq 2q+2.
$$

Besides, consider the two segments $P[x,u_1]$ and $P[u_t,z]$.
Suppose that
\begin{align*}
N_P(H)\cap V(P[x,u_1])=\{a_0,a_1,\ldots,a_m\}
\end{align*}
and
\begin{align*}
N_P(H)\cap V(P[u_t,z])=\{a_{m+1},a_{m+2},\ldots,a_{q+1}\},
\end{align*}
where $m=|S\cap V(P[x,u_1])|$, $q-m=|S\cap V(P[u_t,z])|$, $a_m=u_1$,
$a_{m+1}=u_t$, and $a_j$, $0\leq j\leq q+1$ are in order {along} $P$.
Note that $l(P[x,a_0])+l(P[a_{q+1},z])\geq 2-\theta$ and
$l(P[a_j,a_{j+1}])\geq 2$, for every $0\leq j\leq q$, and $j\neq m$,
we have
$$
l(P[x,u_1])+l(P[u_t,z])\geq 2q+2-\theta.
$$

Thus summing over the {lengths} of all the segments, yields
\begin{align*}
l(P)&=l(P[x,u_1])+\sum_{i=1}^{t-1}l(P[u_i,u_{i+1}])+l(P[u_t,z])\\
    &\geq 2(|S\cap V(P[x,u_1])|+|S\cap V(P[u_t,z])|)+2-\theta\\
    &+\sum_{^{~i=1}_{u_i\in T_r}}^{t-1}(d_H^*(u_i,u_{i+1})+2|S\cap
    V(P[u_i,u_{i+1}])|)+\sum_{^{~i=1}_{u_i\notin T_r}}^{t-1}(2|S\cap
    V(P[u_i,u_{i+1}])|+2)\\
    &=\sum_{u_i\in T_r}d_H^*(u_i,u_{i+1})+2(s+t-t_r)-\theta.
\end{align*}

This ends the proof.
\end{proof}

In the following, we call a strong attached pair $\{u_j,u_{j+1}\}$
of $H$ to $P$ in $G$ \emph{transitive} if $Y\cap
V(P(u_j,u_{j+1}))=\emptyset$.

\begin{lemma}
Let $G$ be a graph {and }$P$ a path of $G$. Suppose that $H$ is a
separable component of $G-P$, $B$ is an endblock of $H$, $b$ is the
cut vertex of $H$ contained in $B$, $M=B-b$. Let
$T=\{u_1,u_2,\ldots,u_t\}$ be a maximum strong attachment of $H$ to
$P$. If $H$ is locally $k$-connected to {$P$, then}\\
(1) {$|N_P(M)\cap T|\geq\min\{k-1,m+d'_2\}$}; and\\
(2) there exist at least $\min\{k-1,m+d'_2\}$ strong attached pairs
which are joined to $M$,\\
where $m=|V(M)|$ and $d'_2$ is the number of vertices in $N_P(M)$
which has at least two neighbors in $H$.
\end{lemma}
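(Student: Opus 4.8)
The plan is to reduce everything to the single endblock $B$ by collapsing the rest of $H$ to one vertex, and then to read both conclusions off Lemma 1 applied to the reduced component. First I would contract, one at a time, every block of $H$ other than $B$. Since $H-M$ is connected and meets $B$ only in the cut vertex $b$, after finitely many contractions all of $H-M$ is collapsed to a single vertex, which I keep denoting by $b$; the resulting component is precisely the block $B$ on vertex set $V(M)\cup\{b\}$, so it has $m+1$ vertices. By Proposition 3 each contraction preserves local $k$-connectedness to $P$, so the contracted component $\tilde H:=B$ is still locally $k$-connected to $P$ in the contracted graph, while all edges between $P$ and $M$ are left untouched.

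The reason for contracting (rather than simply deleting) $H-M$ is that the single edge to $b$ now records whether a vertex of $N_P(M)$ had a neighbour in $H-M$. Indeed, $a\in N_P(M)$ keeps all its neighbours in $M$ and acquires $b$ as a neighbour of $\tilde H$ exactly when $a$ was adjacent to $H-M$; hence $a$ has at least two neighbours in $\tilde H$ if and only if it has at least two neighbours in $H$, while a path vertex adjacent only to $H-M$ now has the single neighbour $b$. Thus the number of vertices of $N_P(\tilde H)$ with at least two neighbours in $\tilde H$ is exactly $d'_2$. Applying Lemma 1(3) to $\tilde H$, whose order is $m+1$, produces a maximum strong attachment $\tilde T=\{w_1,\dots,w_{\tilde t}\}$ of $\tilde H$ to $P$ with $\tilde t\ge\min\{k,\,m+1+d'_2\}$.

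Conclusion (2) then comes out cleanly, the only loss being the single vertex $b$. The two independent edges realising any strong attached pair of $\tilde H$ end at two distinct vertices of $\tilde H$, so at most one of them ends at $b$; hence every one of the $\tilde t-1$ consecutive pairs of $\tilde T$ is joined to $M$, and, as an edge into $M$ survives the contraction, each is a genuine strong attached pair joined to $M$ in $G$. This gives $\tilde t-1\ge\min\{k-1,\,m+d'_2\}$ strong attached pairs joined to $M$, as required.

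For (1) the same vertex $b$ is the source of the difficulty, and this is the step I expect to be the main obstacle. A vertex of $\tilde T$ fails to lie in $N_P(M)$ exactly when it is adjacent in $\tilde H$ only to $b$, and two such ``$b$-only'' vertices can never be consecutive in $\tilde T$ (two consecutive ones could not be joined to $\tilde H$ by independent edges). What must be shown is that their total contribution costs at most one unit, i.e. $|N_P(M)\cap\tilde T|\ge\min\{k-1,m+d'_2\}$; I would establish this by choosing, among all maximum strong attachments of $\tilde H$, one using $b$ as sparingly as possible and then invoking the monotone neighbourhood structure of Lemma 1(2) together with Lemma 1(1). A cleaner but weaker route, useful as a sanity check, is to observe that since $b$ is a cut vertex of $H$ at most one of the $k$ internally disjoint $x$-to-$P$ paths from a vertex $x\in V(M)$ can pass through $b$, so $M$ is locally $(k-1)$-connected to $P$ in $G[V(M)\cup V(P)]$; Lemma 1(3) applied to $M$ then yields $|N_P(M)\cap T|\ge\min\{k-1,m+d''_2\}$, where $d''_2$ counts the vertices of $N_P(M)$ with at least two neighbours in $M$, and the contraction above is exactly what upgrades $d''_2$ to $d'_2$. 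The degenerate case $m=1$, in which $B$ is a single edge, is immediate and I would dispose of it at the outset.
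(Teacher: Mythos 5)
Your contraction set-up is sound as far as it goes: repeatedly contracting the blocks of $H$ other than $B$ (Proposition 3) does collapse $H-M$ to a single vertex while preserving local $k$-connectedness, your bookkeeping showing that exactly $d'_2$ vertices of $N_P(\tilde H)$ have two or more neighbours in the contracted component $\tilde H$ is correct, and the $\tilde t-1$ consecutive pairs of a maximum strong attachment $\tilde T$ of $\tilde H$ do pull back to strong attached pairs of $H$ joined to $M$. So you obtain $\min\{k-1,m+d'_2\}$ strong attached pairs joined to $M$ --- but they are consecutive pairs of $\tilde T$, not of the given attachment $T$, and this is where the proposal breaks down. The lemma is a statement about the specific maximum strong attachment $T$ of $H$ fixed in its hypothesis: part (1) literally bounds $|N_P(M)\cap T|$, and part (2) is used later (proof of Theorem 4, Case 2) by feeding the good pairs $\{u_p,u_{p+1}\}$ into Lemma 2, which requires $u_p\in T_r\subset T$. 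Your $\tilde T$ is a strong attachment of $H$ but in general not a maximum one: a pair of $T$ whose two independent edges can only be chosen with both ends in $H-M$ is no longer a strong attached pair after the contraction (both edges now end at the same vertex), so $T$ need not survive as a strong attachment of $\tilde H$, and nothing in the proposal transfers information between $\tilde T$ and $T$. Hence part (1) is simply not proved: even a complete argument for $|N_P(M)\cap\tilde T|\geq\min\{k-1,m+d'_2\}$ would be about the wrong object.

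Moreover, the argument you sketch for that bound is the step you yourself flag as the main obstacle, and it is left as a plan (``choose $\tilde T$ using $b$ as sparingly as possible, then invoke Lemma 1(1),(2)''), so the hard step is missing even in the contracted setting; and the fallback route misapplies Lemma 1(3): applied to $M$ inside the subgraph induced by $V(M)\cup V(P)$, it lower-bounds the size of a maximum strong attachment \emph{of $M$} to $P$, which again is not $|N_P(M)\cap T|$. This is exactly the difficulty the paper's proof is built to handle: it argues directly about $T$, by induction on $d'_2$. In the base case $d'_2=0$ it takes $\min\{k-1,m\}$ independent $P$--$M$ edges (Proposition 1) and uses Lemma 1(1),(2) to show each of their $P$-ends either lies in $T$ or has a surrogate in $T$ with the same unique neighbour in $H$; in the inductive step it deletes the edges from a vertex $u_j\in N_P(M)$ having two neighbours in $H$ (Proposition 2) and proves, via Lemma 1(2), that an explicit subset of $T$ of size $t-1$ or $t-2$ remains a \emph{maximum} strong attachment in the reduced graph, so that the induction applies to $T$ itself; part (2) then follows from (1) and Lemma 1(3) by adding, if necessary, one extra pair $\{u_i,u_{i+1}\}$ with $u_i\notin N_P(M)$ and $u_{i+1}\in N_P(M)\cap T$. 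To salvage your approach you would need a lemma relating maximum strong attachments of $H$ and of $\tilde H$ --- which is essentially the content you skipped.
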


\begin{proof}
Since $H$ is locally $k$-connected to $P$, $|V(P)|\geq k$. {It is easy} to
know that $M$ is locally $(k-1)$-connected to $P$ in the subgraph
induced by {$V(P)\cup V(M)$}. By Proposition 1, there are
$\min\{k-1,m\}$ independent edges in $E(P,M)$. Let $v_iw_i$, $1\leq
i\leq\min\{k-1,m\}$ be such edges, where $v_i\in V(P)$ and $w_i\in
V(M)$.

If $v_i$ has at least two neighbors in $H$, then by Lemma {1 (1)},
$v_i\in T$. If $v_i$ has only one neighbor $w_i$ in $H$, then by
Lemma {1 (2)}, there exists a vertex $v'_i$ (maybe $=v_i$) in $T$ which
also has only one neighbor $w_i$ in $H$. This implies that
$|N_P(M)\cap T|\geq\min\{k-1,m\}$.

Now, we prove (1) by induction on $d'_2$. If $d'_2=0$, then by the
analysis above, the assertion is true. Thus we assume that $d'_2\geq
1$.

Let $u_j$ be a vertex in $N_P(M)$ which has at least two neighbors
in $H$ ($u_j$ is of course in $T$ by Lemma {1 (1)}{{)}}. Let $G'$ be the
graph obtained from $G$ by deleting all edges from $u_j$ to $H$. By
Proposition 2, $H$ is locally $(k-1)$-connected to $P$ in $G'$.

If $u_j=u_1$ or $u_t$, or $\{u_{j-1},u_{j+1}\}$ are joined to $H$ by
two independent edges, then $T'=T\backslash\{u_j\}$ is a strong
attachment of $H$ to $P$ in $G'$. Since $u_j$ is joined to at least
two vertices of $H$ in $G$, any strong attachment of $H$ to $P$ in
$G'$ together with $u_j$ is a strong attachment of $H$ to $P$ in
$G$. Since $|T'|=t-1$, we see that $T'$ is a maximum strong
attachment of $H$ to $P$ in $G'$. By the induction hypothesis,
{$$
|N_P(M)\cap T'|\geq\min\{k-2,m+d'_2-1\}.
$$}
Therefore
{$$
|N_P(M)\cap T|\geq\min\{k-1,m+d'_2\},
$$}
as required.

If $u_j\in \{u_2,\ldots,u_{t-1}\}$, and $\{u_{j-1},u_{j+1}\}$ {are
not} joined to $H$ by two independent edges, i.e.,
$$
N_H(u_{j-1})=N_H(u_{j+1})=\{w\},
$$
for some $w\in V(H)$, then
$$
T'=T\backslash\{u_j,u_{j+1}\}=\{u_1,\ldots,u_{j-1},u_{j+2},\ldots,u_t\}
$$
is a strong attachment of $H$ to $P$ in $G'$. We prove now that $T'$
is maximum by showing that any strong attachment of $H$ to $G'$ has
cardinality at most $t-2=|T'|$.

Let $v_1,v_2$ ($\neq u_j$) be the two vertices in $N_P(H)$ {which are
closest to $u_j$ on $P$, say $v_1$ preceding, and $v_2$ following,}
$u_j$ on $P$ (but not necessarily adjacent to $u_j$ on $P$). Since
$|N_H(u_j)|\geq 2$ and by Lemma {1 (2)},
$$
N_H(v_1)=N_H(u_{j-1})=\{w\}=N_H(u_{j+1})=N_H(v_2).
$$
By the choice of $v_1$ and $v_2$, for any maximum strong attachment
$\{a_1,a_2,\ldots,a_p\}$ of $H$ to $P$ in $G'$, there is an integer
$l$, $0\leq l\leq p$, such that $v_1,v_2\in V(P[a_l,a_{l+1}])$,
where $a_0=x$ and $a_{p+1}=z$. Since $N_H(v_1)=\{w\}=N_H(v_2)$, it
follows from Lemma {1 (2)} that either $N_H(a_l)$ or
$N_H(a_{l+1})=\{w\}$. The former implies a strong attachment
$\{a_1,\ldots,a_l,u_j,v_2,a_{l+1},\ldots,a_p\}$, the latter a strong
attachment $\{a_1,\ldots,a_l,v_1,u_j,a_{l+1},\ldots,a_p\}$, of $H$
to $P$ in $G$; in either case we have that $p+2\leq t$, that is,
$p\leq t-2=|T'|$. This shows that $T'$ is a maximum strong
attachment of $H$ to $P$ in $G'$, as claimed. As before, by the
induction hypothesis,
$$
|N_P(M)\cap T'|\geq\min\{k-2,m+d'_2-1\}.
$$
Consequently
$$
|N_P(M)\cap T|\geq\min\{k-1,m+d'_2\},
$$
which completes the proof of (1).

Now we prove (2). Clearly for every vertex $u_j\in N_P(M)\cap
T\backslash\{u_t\}$, the strong attached pair $\{u_j,u_{j+1}\}$ is
joined to $M$. If {$|N_P(M)\cap
T\backslash\{u_t\}|\geq\min\{k-1,m+d'_2\}$}, then the assertion is
true. By (1), we assume that {$|N_P(M)\cap T|=\min\{k-1,m+d'_2\}$} and
$u_t\in N_P(M)\cap T$.

By Lemma {1 (3)}, $t\geq\min\{k,h+d_2\}\geq \min\{k-1,m+d'_2\}+1${. This}
implies that there exists at least one vertex in $T\backslash
N_P(M)$. We chose a vertex $u_i\in T\backslash N_P(M)$ such that
$u_{i+1}\in N_P(M)\cap T$. Then $\{u_i,u_{i+1}\}$ together with
$\{u_j,u_{j+1}\}$ for $u_j\in N_P(M)\cup T\backslash\{u_t\}$ are
$\min\{k-1,m+d'_2\}$ strong attached pairs joined to $M$.
\end{proof}

In the following, we call a strong attached pair which is joined
to $M$ a \emph{good pair} (with respect to $M$). Let
$\{u_j,u_{j+1}\}$ be a {strong attached pair}. If one of the
vertices in $\{u_j,u_{j+1}\}$ is joined to $M$, and {the other}
to $H-M$, then we call it a \emph{better pair} (with respect to
$M$); and if one of the vertices in $\{u_j,u_{j+1}\}$ is joined to
$M$, and {the other} to $H-B$, then we call it a \emph{best pair}
(with respect to $M$).

\section{Proof of Theorem 4}

In order to prove the theorem, we chose a longest $(x,Y,z)$-path $P$
{in} $G$. Clearly $|V(P)|\geq k$. Moreover, by the $k$-connectedness
of $G$, for each component $H$ of $G-P$, $H$ is locally
$k$-connected to $P$, and $P$ is a locally longest $(x,Y,z)$-path
with respect to $H$. So it is sufficient to prove that:

\begin{prop}
Let $G$ be a graph, $P$ an $(x,Y,z)$-path of $G$, where $x,z\in
V(G)$, $Y\subset V(G)$, and $|Y|=k-2$. Suppose that the average
degree of vertices in $V(G)\backslash\{x,z\}$ is $r$. If for each
component $H$ of $G-P$, $H$ is locally $k$-connected to $P$, and $P$
is a locally longest $(x,Y,z)$-path with respect to $H$, then
$l(P)\geq r$.
\end{prop}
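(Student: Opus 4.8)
The plan is to bound the length of $P$ from below by relating it to the sum of degrees of the vertices not in $\{x,z\}$. The key idea is that $r$ is the average degree, so $r(n-2) = \sum_{v\neq x,z} d(v)$, where $n = |V(G)|$. I want to show $l(P) \geq r$, which after clearing denominators amounts to showing $l(P)\cdot(n-2) \geq \sum_{v\neq x,z}d(v)$. Since $|V(P)|\geq k$ and $P$ is locally longest with respect to every component $H$ of $G-P$, the machinery of strong attachments from Lemmas 1, 2 and the three Propositions should let me control how much each component $H$ contributes to both sides of this inequality.

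First I would split the vertex set into those lying on $P$ and those lying in the components $H_1,\dots,H_c$ of $G-P$. For vertices on $P$, their contribution to $\sum d(v)$ is controlled crudely by $l(P)$ itself (each such vertex has degree at most $|V(P)|-1$ plus edges into the components, but the path has $l(P)+1$ vertices). The real work is bounding the degree sum over the off-path components. For a single component $H$, I would fix a maximum strong attachment $T=\{u_1,\dots,u_t\}$ and invoke Lemma 2, which gives
$$
l(P)\geq\sum_{u_i\in T_r}d_H^*(u_i,u_{i+1})+2(s+t-t_r)-\theta.
$$
This says that the length of $P$ already ``pays for'' the codistances $d_H^*(u_i,u_{i+1})$ across transitive pairs, together with the extra attachment vertices. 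The point of the codistance terms is that a long $(u_i,u_{i+1})$-path through $H$ forces $P$ to be at least that long on the corresponding segment, so the internal vertices of $H$ get charged against $l(P)$ rather than inflating the degree sum uncontrollably.

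The main step is then an amortized counting argument: I would show that for each component $H$, the total degree $\sum_{v\in V(H)}d(v)$ plus the degrees contributed by the attachment vertices on $P$ can be bounded by $l(P)$ times (roughly) the number of vertices of $H$ together with the attachment vertices. This is where the structural Lemma 2 on separable components and endblocks becomes essential: when $H$ is $2$-connected I can use the codistance bound directly, but when $H$ has cut vertices I need to peel off endblocks $B$ (with $M=B-b$) and use the ``good/better/best pair'' classification to ensure that the vertices of $M$, which may have high degree into $P$, are matched against enough distinct strong attached pairs to absorb their degree into $l(P)$. The induction in Lemma 2(1) guarantees $\min\{k-1,m+d_2'\}$ good pairs, which is exactly what is needed to charge the degrees of $M$'s vertices.

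\textbf{The hard part} will be the bookkeeping that combines all components simultaneously so that no vertex or edge is counted twice and the per-component bounds assemble into the single clean inequality $l(P)(n-2)\geq\sum_{v\neq x,z}d(v)$. In particular I expect the delicate points to be: handling the boundary segments $P[x,u_1]$ and $P[u_t,z]$ correctly (the role of $\theta=|\{x,z\}\cap N_P(H)|$), making sure the specified set $Y$ does not obstruct the codistance replacements (this is why only transitive pairs, those with $Y\cap V(P(u_i,u_{i+1}))=\emptyset$, contribute codistance terms), and verifying that the edges within $H$ and between $H$ and $P$ are charged against the correct portion of $l(P)$. I would organize the argument by induction on the structure of $G-P$, reducing to the case where each component is $2$-connected via Proposition 3 (block contraction) and then closing the induction using the endblock analysis of Lemma 2.
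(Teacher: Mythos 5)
There is a genuine gap, and it sits exactly at the point your sketch waves past: you never explain how to get a \emph{lower bound} on the codistances $d_H^*(u_i,u_{i+1})$ appearing in Lemma 2. Lemma 2 by itself only converts codistances into path length; if you cannot show that some transitive pair has $d_H^*(u_p,u_{p+1})$ large (comparable to $r$, or to the density of $H$), Lemma 2 degenerates to $l(P)\geq 2(s+t)-\theta+O(1)$, which is far from $r$ when a component $H$ is large and dense but has few attachment vertices. The paper's proof supplies this bound by applying Theorem 3 (Fan's theorem, the case $k=2$ that is being generalized) to the subgraph induced by $V(H)\cup\{u_p,u_{p+1}\}$ (with the edge $u_pu_{p+1}$ added), obtaining $d_H^*(u_p,u_{p+1})\geq r-2s-2t+\theta+2$, which plugged into Lemma 2 gives $l(P)\geq r$ in one stroke. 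Your proposal never invokes Theorem 3 at all, and without it the ``internal vertices of $H$ get charged against $l(P)$'' claim has no mechanism behind it.

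The second missing idea is the actual induction. The paper does not prove a per-component charging inequality of the form $2e(H)+2e(P,H)\leq l(P)\cdot|V(H)|$ (your plan requires something like this for every component simultaneously, and it is extremely tight: equality essentially holds already for a clique attached at $t$ vertices, and the error terms $\theta$, the non-transitive pairs blocked by $Y$, and edges at $x,z$ all push against you). Instead it inducts on $|V(G-P)|$ with a density dichotomy: for a nonseparable component, either $r'h+e(P-\{x,z\},H)\leq rh$, in which case deleting $H$ keeps the average degree of the remaining non-$\{x,z\}$ vertices at least $r$ and induction finishes; or the reverse inequality holds, in which case $H$ is dense enough that counting gives $h+d_2>k$, hence $t\geq k$, hence (since $|Y|=k-2$) a transitive pair exists, and Theorem 3 applies as above. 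Separable components get the same dichotomy with an endblock contracted via Proposition 3, and the good/better/best pair analysis of Lemma 3 (not Lemma 2, which you miscite) is what locates a transitive pair attached to the endblock. Your ``induction on the structure of $G-P$, reducing to the $2$-connected case via block contraction'' skips the dichotomy, but the contraction step is only legitimate when the contracted part is sparse relative to $r$ --- otherwise the average-degree hypothesis is not preserved --- so the reduction as you state it does not go through.
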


\begin{proof}
We prove this proposition by induction on $|V(G-P)|$. If
$V(G-P)=\emptyset$, note that $r\leq |V(G)|-1$, the result is
trivially true. So we assume that $V(G-P)\neq\emptyset$. Let $H$ be
a component of $G-P$.

Let $d=|N_{P}(H)|$, $\theta=|\{x,z\}\cap N_P(H)|$ and
$N_P(H)=\{v_{1},v_{2},\ldots,v_d\}$, where $v_i$, $1\leq i\leq d$,
are in order  {along} $P$. Then, we have
$$
l(P)=l(P[x,v_{1}])+\sum_{i=1}^{d-1}l(P[v_{i},v_{i+1}])+l(P[v_d,z]).
$$
It is {easy} to know that $l(P[x,v_{1}])+l(P[v_d,z])\geq 2-\theta$
and $l(P[v_{i},v_{i+1}])\geq 2$ for $1\leq i\leq d-1$. Thus, we have
$$
l(P)\geq 2d-\theta.
$$

Note that $d\geq k$ by the {local} $k$-connectedness of $H$ to $P$
and clearly $\theta\leq 2$. If $r\leq 2k-2$, then we have $l(P)\geq
2k-2\geq r$, and the proof is complete. Thus we assume that
\begin{align}
r>2k-2.
\end{align}

Besides, if $d\geq(r+\theta)/2$, then $l(P)\geq r$, and we complete
the proof. Thus, we assume that
\begin{align}
d<(r+\theta)/2.
\end{align}

Let $T=\{u_1,u_2,\ldots,u_t\}$  be a maximum strong attachment of
$H$ to $P$. Set $S=N_{P}(H)\backslash T$ and $s=|S|$ (note that
$s+t=d$). Let {$T_r=\{u_i\in T\backslash\{u_t\}: Y\cap
V(P(u_i,u_{i+1}))=\emptyset\}$} and $t_r=|T_r|$.

Clearly, for every transitive strong attached pair
$\{u_j,u_{j+1}\}$, where $u_j\in T_r$, we have
\begin{align}
d_{H}^{*}(u_j,u_{j+1})\geq 2.
\end{align}

We distinguish two cases:

\begin{case}
$H$ is nonseparable.
\end{case}
{
Let $h=|V(H)|$ and $r'$ the average degree of vertices in $V(H)$.
If $r'h+e(P-\{x,z\},H)\leq rh$, then we consider the graph $G'$
obtained from $G$ by deleting the component $H$. Note that
\begin{align*}
\sum_{v\in V(G')\backslash\{x,z\}}{d_{G'}{(v)}}    &=
r(|V(G)|-2)-r'h-e(P-\{x,z\},H)\\
    &\geq r(|V(G)|-2)-rh\\
    &=r(|V(G')|-2).
\end{align*}
By the induction hypothesis, we have $l(P)\geq r$, and the proof is
complete. Thus we assume that
\begin{align}
r'h+e(P-\{x,z\},H)>rh
\end{align}
}
We use $d_{1}$ to denote the number of vertices in $N_{P}(H)$ which
have only one neighbor in $V(H)$, $d_2=d-d_1$, $\theta_{1}$ to
denote the number of vertices in $\{x,z\}$ which have only one
neighbor in $V(H)$ and $\theta_{2}=\theta-\theta_1$.

Clearly,
$$
r'h\leq h(h-1+d_{2})+d_{1} \mbox{ and } e(P-\{x,z\},H)\leq
h(d_{2}-\theta_{2})+d_{1}-\theta_{1}.
$$
Thus, by {(4)}, we have
$$
h(h-1+2d_{2}-\theta_2)+2d_{1}-\theta_{1}\geq r'h+e(P-\{x,z\},H)>rh.
$$
Note that $d_1=d-d_2$ and $\theta_1=\theta-\theta_2$, {we have}
$$
h(h-1+2d_{2}-\theta_2)+2d-2d_2-\theta+\theta_2\geq rh.
$$
By (2), we have
$$
h(h-1+2d_{2}-\theta_2)+(r+\theta)-2d_2-\theta+\theta_2>rh.
$$
Thus
$$
(h-1)(h+2d_{2}-r-\theta_{2})>0.
$$
This implies that $h\geq 2$ and $h+2d_{2}>r+\theta_{2}\geq r$, and
then $2h+2d_2>r+2$. By (1), {we have} $2h+2d_2>2k$, that is
\begin{align}
h+d_2>k.
\end{align}

By (5) and Lemma {1 (3)}, $t\geq k$. Since $|Y|\leq k-2$, there exists
at least one transitive strong attached pair $(u_p,u_{p+1})$ in $T$,
where $u_p\in T_r$.

Let $G'$ be the subgraph induced by $V(H)\cup\{u_p,u_{p+1}\}$. If
$u_pu_{p+1}\notin E(G)$, we add the edge $u_pu_{p+1}$ in $G'$. Thus
$G'$ is 2-connected and
\begin{align*}
\sum_{v\in V(G')\backslash\{u_p,u_{p+1}\}}d_{G'}(v)   &=\sum_{v\in
V(H)}d(v)-e(N_P(H)\backslash\{u_p,u_{p+1}\},H)\\
    &=r'h-e(N_P(H)\backslash\{u_p,u_{p+1}\},H)\\
    &\geq rh-e(P-\{x,z\},H)-e(N_P(H)\backslash\{u_p,u_{p+1}\},H).
\end{align*}

Note that
\begin{align*}
    &e(P-\{x,z\},H)\leq (s+t-\theta)h, \mbox{ and }\\
    &e(N_P(H)\backslash\{u_p,u_{p+1}\},H)\leq (s+t-2)h,
\end{align*}
we have
\begin{align*}
\sum_{v\in V(G')\backslash\{u_p,u_{p+1}\}}d_{G'}(v)   &\geq
rh-(s+t-\theta)h-(s+t-2)h\\
    &=(r-2s-2t+\theta+2)h.
\end{align*}

By Theorem 3, $G'$ contains a $(u_p,u_{p+1})$-path of length at
least $r-2s-2t+\theta+2$, which implies that
\begin{align}
d_{H}^{*}(u_p,u_{p+1})\geq r-2s-2t+\theta+2.
\end{align}

Substituting (6) for $d_{H}^{*}(u_p,u_{p+1})$ in Lemma 2 and {(3)} for
the other terms, we have
$$
l(P)\geq (r-2s-2t+\theta+2)+2(t_{r}-1)+2(s+t-t_r)-\theta\geq r.
$$

\begin{case}
$H$ is separable.
\end{case}

{Let $B$ be an endblock of $H$, $b$ the cut vertex of $H$
contained in $B$, $M=B-b$, $m=|V(M)|$, and $r''$ the average
degree of the vertices in $V(M)$.}

If $r''m+e(P-\{x,z\},M)+d_{M}(b)\leq rm$, then we consider the graph
$G'$ obtained from $G$ by contracting $B$. Let $H'$ be the component
of $G'-P$ obtained from $H$ by contracting $B$. By Proposition 3,
$H'$ is locally $k$-connected to $P$. Clearly $P$ is a locally
longest $(x,Y,z)$-path with respect to $H'$, and
\begin{align*}
\sum_{v\in V(G')\backslash\{x,z\}}d_{G'}(v) &\geq\sum_{v\in
V(G)\backslash\{x,z\}}d(v)-r''m-e(P-\{x,z\},M)-d_{M}(b)\\
    &\geq r(|V(G)|-2)-rm\\
    &=r(|V(G')|-2).
\end{align*}
By the induction hypothesis, $l(P)\geq r$, and the proof is
complete. Thus we assume that
\begin{align}
r''m+e(P-\{x,z\},M)+d_{M}(b)>rm.
\end{align}

Let $d'_0=|N_P(H)\backslash N_P(M)|$, $d'_1$ be the number of
vertices in $N_P(M)$ which have only one neighbor in $V(H)$,
$d'_2=d-d'_0-d'_1$; $\theta'_0=|\{x,z\}\cap N_P(H)\backslash
N_P(M)|$, $\theta'_1$ be the number of vertices in $\{x,z\}\cap
N_P(M)$ which have only one neighbor in $V(H)$ and
$\theta'_2=\theta-\theta'_0-\theta'_1$.

Now we prove that
\begin{align}
m+d'_2\geq k-1.
\end{align}

Let $B'$ be an endblock of $H$ other than $B$, $b'$ the cut
vertex of $H$ contained in $B'$, $M'=B'-b'$ and $m'=|V(M')|$.

By the {local} $k$-connectedness of $H$ to $P$, $|N_P(M')|\geq k-1$.
If $|N_P(M')\backslash N_P(M)|\leq m$, then $d'_2\geq |N_P(M)\cap
N_P(M')|\geq k-1-m$, and $m+d'_2\geq k-1$, and (8) holds. Thus we
assume that $|N_P(M')\backslash N_P(M)|\geq m+1$. So we have
\begin{align}
d'_0\geq m+1.
\end{align}

Clearly,
\begin{align*}
    &r''m\leq m(m+d'_2)+d'_1,\\
    &e(P-\{x,z\},M)\leq m(d'_2-\theta'_2)+d'_{1}-\theta'_1, \mbox{ and }\\
    &d_M(b)\leq m.
\end{align*}

Thus, by (7),
$$
m(m+2d'_2+1-\theta'_2)+2d'_1-\theta'_1\geq
r''m+e(P-\{x,z\},M)+d_{M}(b)>rm.
$$
Note that $d'_1=d-d'_0-d'_2$ and
$\theta'_1=\theta-\theta'_0-\theta'_2$, {we have}
$$
m(m+2d'_{2}+1-\theta'_2)+2d-2d'_0-2d'_2-\theta+\theta'_0+\theta'_2>rm.
$$
By (2) and (9), we have
$$
m(m+2d'_{2}+1-\theta'_2)+(r+\theta)-2(m+1)-2d'_2-\theta+\theta'_0+\theta'_2>rm.
$$
Thus
$$
(m-1)(m+2d'_{2}-r-\theta'_{2})>2-\theta'_0\geq 0.
$$
This implies that $m\geq 2$ and $m+2d'_{2}>r+\theta'_{2}\geq r$, and
then $2m+2d'_2>r+2$. By (1), $2m+2d'_2>2k$, that is $m+d'_2>k$, and
(8) holds.

By Lemma {3 (2)}, there exist at least $k-1$ good pairs with respect to
$M$. Since $|Y|=k-2$, there exists at least one transitive good pair
$\{u_p,u_{p+1}\}$ with respect to $M$. Similarly there exists at
least one transitive good pair $\{u_q,u_{q+1}\}$ with respect to
$M'$.

First we assume that there is a transitive best pair with respect to
$M$ or $M'$. Without loss of generality, we assume that
$\{u_p,u_{p+1}\}$ is a best pair, where $u_p\in N_P(M)$ and
$u_{p+1}\in N_P(H-B)$. Consider the subgraph $G'$ induced by
$V(B)\cup\{u_p\}$. If $u_pb\notin E(G)$, we add the edge $u_pb$ in
$G'$. Thus $G'$ is 2-connected and
\begin{align*}
\sum_{v\in V(G')\backslash\{u_p,b\}}d_{G'}(v)    &=\sum_{v\in
V(M)}d(v)-e(N_P(H)\backslash\{u_p\},M)\\
    &=r''m-e(N_P(H)\backslash\{u_p\},M)\\
    &\geq rm-e(P-\{x,z\},M)-d_M(b)-e(N_P(H)\backslash\{u_p\},M).
\end{align*}

Note that
\begin{align*}
    &e(P-\{x,z\},M)\leq (s+t-\theta)m,\\
    &d_M(b)\leq m, \mbox{ and }\\
    &e(N_P(H)\backslash\{u_p\},M)\leq (s+t-1)m,
\end{align*}
we have
\begin{align*}
\sum_{v\in V(G')\backslash\{u_p,b\}}d_{G'}(v)   &\geq
rm-(s+t-\theta)m-m-(s+t-1)m\\
    &=(r-2s-2t+\theta)m.
\end{align*}

By Theorem 3, $G'$ contains a $(u_p,b)$-path of length at least
$r-2s-2t+\theta$. It is {clear} that there is a $(b,u_{p+1})$-path
in $H-B$ of length at least 2, which implies that
\begin{align}
d_{H}^{*}(u_p,u_{p+1})\geq r-2s-2t+\theta+2.
\end{align}

Substituting (10) for $d_{H}^{*}(u_p,u_{p+1})$ in Lemma 2 and {(3)}
for the other terms, we have
$$
l(P)\geq (r-2s-2t+\theta+2)+2(t_{r}-1)+2(s+t-t_r)-\theta\geq r,
$$
as required.

So, we assume that there are no transitive best pairs with respect
to $M$ or $M'$.

Now we assume that there is a transitive better pair (but not best
pair) with respect to $M$ or $M'$. Without loss of generality, we
assume that $\{u_p,u_{p+1}\}$ is a better pair, where $u_p\in
N_P(M)$ and $u_{p+1}\in N_P(b)$. Consider the subgraph $G'$ induced
by $V(B)\cup\{u_p\}$. If $u_pb\notin E(G)$, we add the edge $u_pb$
in $G'$. Thus $G'$ is 2-connected and
\begin{align*}
\sum_{v\in V(G')\backslash\{u_p,b\}}d_{G'}(v)\geq
rm-e(P-\{x,z\},M)-d_M(b)-e(N_P(H)\backslash\{u_p\},M).
\end{align*}

Note that
\begin{align*}
    &e(P-\{x,z\},M)\leq (s+t-\theta)m, \mbox{ and }\\
    &d_M(b)\leq m,
\end{align*}
and since at least one vertex of $u_q$ and $u_{q+1}$ is not joined
to $M$ (otherwise, $\{u_q,u_{q+1}\}$ will be a best pair), we have
$$
e(N_P(H)\backslash\{u_p\},M)\leq (s+t-2)m.
$$
Thus we have
\begin{align*}
\sum_{v\in V(G')\backslash\{u_p,b\}}d_{G'}(v)   &\geq
rm-(s+t-\theta)m-m-(s+t-2)m\\
    &=(r-2s-2t+\theta+1)m.
\end{align*}

By Theorem 3, $G'$ contains a $(u_p,b)$-path of length at least
$r-2s-2t+\theta+1$, and then, by $bu_{p+1}\in E(G)$,
\begin{align*}
d_{H}^{*}(u_p,u_{p+1})\geq r-2s-2t+\theta+2.
\end{align*}
Thus we also have $l(P)\geq r$.

So, we assume that there are no transitive better pairs with respect
to $M$ or $M'$. Thus $\{u_p,u_{p+1}\}$ and $\{u_q,u_{q+1}\}$ are two
distinct strong attached pairs.

If $m=1$, then $\{u_p,u_{p+1}\}$ will be a better pair with respect
to $M$. Thus we assume that $m\geq 2$.

If $m=2$, then $B$ is a triangle, and $d_H^*(u_p,u_{p+1})=4$. Since
$\{u_p,u_{p+1}\}$ is not a better pair, we have that $u_p\in
N_P(M)$. {Similar} to the analysis above, we have $d_H^*(u_p,b)\geq
r-2s-2t+\theta+1$. But $d_H^*(u_p,b)=3$, we have
$$
d_H^*(u_p,u_{p+1})=4\geq r-2s-2t+\theta+2.
$$
Then $l(P)\geq r$.

So we {assume} that
\begin{align}
m\geq 3, \mbox{ and similarly, } m'\geq 3.
\end{align}

{It is easy} to know that $d_H^*(u_p,u_{p+1})\geq 4$. Thus if
$r-2s-2t+\theta\leq 2$, we will have
$$
d_H^*(u_p,u_{p+1})\geq r-2s-2t+\theta+2,
$$
and then $l(P)\geq r$. So we assume that
\begin{align}
r-2s-2t+\theta\geq 2.
\end{align}

Note that $u_p$ and $u_{p+1}$ are joined to $B$ by two independent
edges. Consider the subgraph $G'$ induced by
$V(B)\cup\{u_p,u_{p+1}\}$. If $u_pu_{p+1}\notin E(G)$, we add the
edge $u_pu_{p+1}$ in $G'$. Thus $G'$ is 2-connected and
\begin{align*}
    &\sum_{v\in V(G')\backslash\{u_p,u_{p+1}\}}d_{G'}(v)\\
    &=\sum_{v\in
V(M)}d(v)-e(N_P(H)\backslash\{u_p,u_{p+1}\},M)+d_M(b)+|\{u_p,u_{p+1}\}\cap N(b)|\\
    &=r''m+d_M(b)-e(N_P(H)\backslash\{u_p,u_{p+1}\},M)+|\{u_p,u_{p+1}\}\cap N(b)|\\
    &\geq rm-e(P-\{x,z\},M)-e(N_P(H)\backslash\{u_p,u_{p+1}\},M).
\end{align*}

Note that
\begin{align*}
    &e(P-\{x,z\},M)\leq (s+t-\theta)m, \mbox{ and }\\
    &e(N_P(H)\backslash\{u_p,u_{p+1}\},M)\leq (s+t-2)m,
\end{align*}
we have
\begin{align*}
\sum_{v\in V(G')\backslash\{u_p,u_{p+1}\}}d_{G'}(v)   &\geq
rm-(s+t-\theta)m-(s+t-2)m\\
    &=(r-2s-2t+\theta+2)m.
\end{align*}

By Theorem 3, $G'$ contains a $(u_p,u_{p+1})$-path of length at
least $(r-2s-2t+\theta+2)m/({1+m})$, which implies that
\begin{align*}
d_{H}^{*}(u_p,u_{p+1})  &\geq (r-2s-2t+\theta+2)\frac{m}{1+m}\\
    &\geq \frac{3}{4}(r-2s-2t+\theta+2).
\end{align*}
(note that $m\geq 3$), and similarly,
\begin{align*}
d_{H}^{*}(u_q,u_{q+1})\geq \frac{3}{4}(r-2s-2t+\theta+2).
\end{align*}
Then by (12),
\begin{align*}
d_{H}^{*}(u_p,u_{p+1})  &+d_{H}^{*}(u_q,u_{q+1})\\
    &\geq\frac{3}{2}(r-2s-2t+\theta+2)\\
    &=(r-2s-2t+\theta+2)+\frac{1}{2}(r-2s-2t+\theta+2)\\
    &\geq r-2s-2t+\theta+4.
\end{align*}
Thus{, by Lemma 2, we have}
$$
l(P)\geq (r-2s-2t+\theta+4)+2(t_{r}-2)+2(s+t-t_r)-\theta\geq r.
$$

The proof is complete.
\end{proof}

\section{Proof of Theorem 8}

By the $k$-connectedness of $G$, it contains a $Y$-cycle. If
$2e(G)/(n-1)\leq 3$, then the result is trivially true. Thus we
assume that $2e(G)/(n-1)>3$.

We chose a vertex $y\in Y$, and construct a graph $G'$ such that
$V(G')=V(G)\cup\{y'\}$, where $y'\notin V(G)$ and
$E(G')=E(G)\cup\{vy': v\in N_{G}(y)\}$. {Clearly}, $G'$ is
$k$-connected. Besides, we have that
$$
e(G')=e(G)+d_{G}(y) \mbox{ and } d_{G'}(y)=d_{G'}(y')=d_G(y),
$$
and the order of $G'$ is $n+1$. Now, {by Theorem 4}, there exists a
$(y,Y\backslash\{y\},y')$-path $P$ of length at least
\begin{align*}
\frac{2e(G')-d_{G'}(y)-d_{G'}(y')}{(n+1)-2}=\frac{2(e(G)+d_{G}(y))-2d_{G}(y)}{n-1}=\frac{2e(G)}{n-1}.
\end{align*}

Let $uy'$ be the last edge of $P$, then $uy\in E(G)$ and
$C=P[y,u]uy$ is a cycle of $G$ passing through all the vertices in
$Y$ of length at least $2e(G)/(n-1)$, which completes the proof.
{\hfill$\Box$}


\begin{thebibliography}{10}

\bibitem{Bondy}
{J.A.~Bondy}, Basic graph theory - paths and cycles, in: R.L. Graham, M. Gr\"{o}tschel and L. Lov\'{a}sz, eds., Handbook of Combinatorics, North-Holland, Amsterdam (1995) 3-110.

\bibitem{Bondy_Murty}
{J.A.~Bondy and U.S.R.~Murty}, {Graph Theory with Applications},
Macmillan London and Elsevier, New York (1976).

\bibitem{Egava_Glas_Locke}
{Y.~Egava, R.~Glas and S.C.~Locke}, Cycles and paths through
specified vertices in $k$-connected graphs, {J. Combin. Theory
B} {\bf 52} (1991) 20-29.

\bibitem{Enomoto}
H.~Enomoto, Long paths and large cycles in finite graphs, {J.
Graph Theory} {\bf 8} (1984) 287-301.

\bibitem{Erdos_Gallai}
{P.~Erd\"{o}s and T.~Gallai}, On maximal paths and circuits of graphs,
{Acta Math. Hungar.}  10 (1959) 337-356.

\bibitem{Fan}
G.~Fan, Long cycles and the codiameter of a graph, {I, J.
Combin. Theory B} 49 (1990) 151-180.

\bibitem{Gould}
{R.J.~Gould}, Advances on the Hamiltonian problem - a survey, {Graphs and Combinatorics} {\bf 19} (2003) 7-52.

\medskip
\bibitem{Locke}
{S.C.~Locke}, A generalization of Dirac's theorem,
{Combinatorica} {\bf 5} (2) (1985) 149-159.

\end{thebibliography}
\end{document}